\theoremstyle{definition}
\newtheorem{definition}{Definition}[section]
\theoremstyle{plain}
\newtheorem{theorem}[definition]{Theorem}
\newtheorem{lemma}[definition]{Lemma}
\newtheorem{corollary}[definition]{Corollary}
\newtheorem{proposition}[definition]{Proposition}
\theoremstyle{definition}
\newtheorem{remark}[definition]{Remark}
\newcommand{\R}{\mathbb{R}}
\newcommand{\Z}{\mathbb{Z}}
\newcommand{\N}{\mathbb{N}}
\newcommand{\ve}{\varepsilon}
\newcommand{\B}{\mathscr{B}}
\newcommand{\T}{\mathcal{T}}
\newcommand{\ES}{\mathcal{S}}
\newcommand{\TInv}{T^{-1}}
\newcommand{\bfv}{\textnormal{\textbf{v}}}
\newcommand{\bfu}{\textbf{u}}
\newcommand{\SQ}{\mathcal{SQ}}
\newcommand{\Loc}{\text{Loc}}
\newcommand{\D}{\mathfrak{D}}
\newcommand{\C}{\mathfrak{C}}
\begin{document}
\title[On conservative sequences and ergodic multipliers]{On conservative sequences and their application to ergodic multiplier problems}
\date{\today}
\author[Elyze]{Madeleine Elyze}
\address[Madeleine Elyze]{Williams College, Williamstown, MA 01267, USA}
\email{mag2@williams.edu}

\author[Kastner]{Alexander Kastner}
\address[Alexander Kastner]{Williams College, Williamstown, MA 01267, USA}
\email{ask2@williams.edu}

\author[Ortiz Rhoton]{Juan Ortiz Rhoton}
\address[Juan Ortiz Rhoton]{MIT, Cambridge, MA 02139, USA}
\email{jor@mit.edu}

\author[Semenov]{Vadim Semenov }
\address[Vadim Semenov]{Courant Institute, New York University, New York, NY 10012, USA }
\email {vadim.semenov@nyu.edu}

\author[Silva]{Cesar E. Silva}
\address[Cesar E. Silva]{Department of Mathematics\\
     Williams College \\ Williamstown, MA 01267, USA}
\email{csilva@williams.edu}

\subjclass[2010]{Primary 37A40; Secondary
37A05, 
37A50} 
\keywords{Infinite measure-preserving, ergodic, conservative, rank-one}

\maketitle

\begin{abstract}
The \emph{conservative sequence} of a set $A$ under a transformation $T$ is  the set of all $n \in \Z$ such that $T^n A \cap A \not = \varnothing$. By studying these sequences, we prove that given any countable collection of nonsingular transformations with no finite invariant measure $\{T_i\}$, there exists a rank-one transformation $S$ such that $T_i \times S$ is not ergodic for all $i$. Moreover, $S$ can be chosen to be rigid or have infinite ergodic index. We establish similar results for $\Z^d$ actions and flows. Then, we find sufficient conditions on rank-one transformations $T$ that guarantee the existence of a rank-one transformation $S$ such that $T \times S$ is ergodic, or, alternatively, conditions that guarantee that $T \times S$ is conservative but not ergodic. In particular, the infinite Chac\'on transformation satisfies both conditions. Finally, for a given ergodic transformation $T$, we study the Baire categories of the sets $E(T)$, $\bar{E}C(T)$ and $\bar{C}(T)$ of transformations $S$ such that $T \times S$ is ergodic, ergodic but not conservative, and conservative, respectively.
\end{abstract}

\section{Introduction}

In \cite{FuWe78}, Furstenberg and Weiss proved that a finite measure-preserving transformation $T$ satisfies the mild mixing property if and only if  for all finite or infinite measure-preserving ergodic transformations $S$, the cartesian product $T \times S$ is ergodic. In this case we say that mild mixing is an
{\it ergodic multiplier property}. The situation in infinite measure is quite different. It was shown in \cite{Br70}, \cite{AaLiWe79} that if $T$ is an infinite measure-preserving ergodic transformation, or a nonsingular ergodic transformation with no equivalent finite invariant measure, then there always exists an ergodic Markov shift $S$ so that the product $T \times S$ is not conservative, hence not ergodic. Thus, there is no ergodic multiplier property for infinite measure-preserving transformations.

The \emph{conservative sequence} of a set $A$ under a transformation $T$ is defined as $C_T(A) := \{n \in \Z: \mu(T^n A \cap A) > 0\}$. In this paper, we analyze the combinatorics of these sequences and use them to give an alternative proof of the aforementioned result of \cite{Br70}, \cite{AaLiWe79}. In addition, our methods allow us to extend the result in the following way. Given any countable collection of nonsingular transformation $\{T_n\}$ with no finite invariant measure, we construct a single rank-one transformation $S$ such that $T_n \times S$ is not conservative, hence not ergodic, for each $n$. Moreover, $S$ can be chosen to be rigid or to have infinite ergodic index. We also study related questions for infinite measure-preserving $\Z^d$-actions and flows. In this context we note that Schmidt and Walters \cite{ScWa82} showed a similar result to the one in \cite{Br70}, \cite{AaLiWe79} for nonsingular actions of locally compact abelian groups, but again we are interested in the multiplier being rank-one, and our methods construct the rank-one action using conservative sets. Section 2 covers the preliminary definitions and Section 3 has our main results.

In Section 4, we employ conservative sequences to establish a sufficient condition on rank-one transformations $T$ that guarantees the existence of a rank-one transformation $S$ such that $T \times S$ is conservative but not ergodic. 
This condition will be general enough to include many infinite rank-one transformations of interest such as the infinite Chac\'on and Hajian-Kakutani transformations. We also discuss in this section that one cannot hope that for all infinite measure-preserving ergodic $T$ there is an ergodic $S$ such that $T\times S$ is conservative but not ergodic.

In Section 5, we show that if $T$ is a  rank-one transformation  with bounded cuts, then there is another rank-one transformation $S$ such that $T \times S$ is ergodic. In Section 6, for a given ergodic transformation $T$, we study the Baire categories of the sets $E(T)$, $\bar{E}C(T)$ and $\bar{C}(T)$ of transformations $S$ such that $T \times S$ is ergodic, ergodic but not conservative, and conservative, respectively. In particular, we show that for a generic transformation $T$, the set $E(T)$ is a generic subset of the measure-preserving transformations.

\textbf{Acknowledgments:} This paper is based on research by the ergodic theory group of the 2016 SMALL undergraduate research project at Williams College. Support for the project was provided by National Science Foundation grant DMS-1347804, the Science Center of Williams College, the Williams College Finnerty Fund, and the Clare Boothe Luce Program of the Henry Luce Foundation. We would like to thank Johann Gaebler, Xiaoyu Xu and Zirui Zhou, the   other members of the SMALL 2016 ergodic theory group, for useful discussions and their continuing support. Finally, we would like to thank Isaac Loh for his help in getting the project started.

\section{Preliminaries}

\subsection{Main definitions}

Let $(X, \B, \mu)$ be a $\sigma$-finite, nonatomic Lebesgue measure space. A transformation $T: X \to X$ is said to be \emph{measurable} if for all $A \in \B$, $\TInv(A) \in \B$. An \emph{invertible measurable} transformation is an invertible transformation $T$ such that both $T$ and $T^{-1}$ are measurable. A measurable transformation $T$ is called \emph{measure-preserving} if for all $A \in \B$, we have $\mu(\TInv(A)) = \mu(A)$; it is called \emph{nonsingular} if for all $A \in \B$, $\mu(\TInv(A)) = 0$ if and only if $\mu(A) = 0$. All transformations we consider are assumed to be invertible measurable transformations unless stated otherwise. Moreover, equalities are always assumed to hold up to sets of measure zero.

Let $G$ be a locally compact Polish topological group. A \emph{measurable action} $\T$ of $G$ on $X$ consists of a family of transformations $\{T^g\}_{g \in G}$ such that the map $G \times X \to X$ given by $(g,x) \mapsto T^g(x)$ is Borel measurable, and such that for a.e. $x \in X$ we have $T^g(T^h(x)) = T^{gh}(x)$ and $T^e(x) = x$, where $g,h \in G$ and $e$ is the identity of $G$. An action $\T = \{T^g\}$ is said to be \emph{measure-preserving} if every transformation $T^g$ is measure-preserving, and is said to be \emph{nonsingular} if every transformation $T^g$ is nonsingular. For our purposes, $G$ will be either $\Z^d$ or $\R$. If $G = \R$, the action is called a \emph{flow}, and when $G = \Z$, the action is simply a transformation.

An action $\T$ is called \emph{ergodic} if whenever $A \in \B$ satisfies $T^g(A) = A$ for all $g \in G$, we have $\mu(A) = 0$ or $\mu(X \setminus A) = 0$; $\T$ is called \emph{conservative} if for all sets $A$ of positive measure, there exists $g \in G$ such that $\mu(T^g(A) \cap A) > 0$. One can show that if $\mu$ is $\sigma$-finite and non-atomic, and $\T$ is an ergodic action, then $\T$ is also conservative. In the case of transformations, or $\Z$-actions, ergodicity is equivalent to the condition that for all sets $A$ and $B$ of positive measure, there exists $n > 0$ such that $\mu(T^n A \cap B) > 0$.

Given two $G$-actions $\T = \{T^g\}_{g \in G}$ and $\ES = \{S^g\}_{g \in G}$ on spaces $X$ and $Y$, respectively, we define their \emph{product action} $\T \times \ES = \{(T \times S)^g\}_{g \in G}$ on $X \times Y$ by
\[(T \times S)^g(x,y) := (T^g x, S^g y).\]
We note that if $\T \times \ES$ is ergodic, then both $\T$ and $\ES$ are ergodic.

For a nonsingular transformation $T$, a set $W \in \B$ with $\mu(W) > 0$ is called \emph{wandering} if all the images $T^n(W)$ ($n \in \Z$) are disjoint. Let $\D(T)$ denote the union of all wandering sets, called the \emph{dissipative part} of $T$. We call $\C(T) := X \setminus \D(T)$ the \emph{conservative part} of $T$, and the partition $X = \D \sqcup C$ is called the \emph{Hopf decomposition} of $T$. It is easy to see that $T$ is conservative if and only if it admits no wandering set, or alternatively $X = \C(T)$. If we have $X = \D(T)$ instead, then we say that $T$ is \emph{totally dissipative}. It is well known that if $T$ and $S$ are conservative and ergodic, then either $T \times S$ is conservative or $T \times S$ is totally dissipative (see Proposition 1.2.4 in \cite{Aa97}).

A transformation $T$ is called \emph{partially rigid} if there exists $0 < \alpha \leq 1$ and an increasing sequence $\{n_i\}$ such that for all sets $A$, $0 < \mu(A) < \infty$, we have
\[\liminf_{i \to \infty} \mu(T^{n_i}A \cap A) \ge \alpha \mu(A).\]
If $\alpha = 1$, we say that $T$ is \emph{rigid}. We say that a transformation $T$ has \emph{ergodic index k} if $T \times \dots \times T$ ($k$ times) is ergodic but $T \times \dots \times T$ ($k+1$ times) is not ergodic; $T$ has \emph{infinite ergodic index} if $T \times \dots \times T$ ($k$ times) is ergodic for every $k$.

The notion of density for subsets of $\N$ will arise frequently. Given a set $E \subset \N$, its \emph{upper density} is defined as
\[\text{Dens}(E) := \limsup_{n \to \infty} \frac{| E \cap \{1, \dots, n\} |}{n}.\]
If the limit exists we simply call $\text{Dens}(E)$ the \emph{density} of the set $E$. We remark that finite intersections of sets of density 1 also have density 1 (which is analogous to saying that finite unions of sets of density 0 have density 0).

Given $E, F \subset \R$, we define their \emph{sumset} to be
\[E \oplus F = \{x + y: x \in E, y \in F\},\]
and their \emph{difference set} to be
\[E - F = \{x - y: x \in E, y \in F\}.\]

As mentioned in the introduction, the notion of conservative sequences is of fundamental importance to our methods of proof. Since the more general notion of a conservative set under an action also comes up, the next definition is stated in this context.

\begin{definition} \label{conservative sets}
Let $\T$ be a nonsingular action of a group $G$, and let $A$ be a set of positive measure. We define the \emph{conservative set} of $A$ under $\T$ as
\[C_T(A) := \{g \in G: \mu(T^g(A) \cap A) > 0\}.\]
In the case where $G = \Z$, we call $C_T(A)$ a \emph{conservative sequence}.
\end{definition}

We note that if $g \in C_\T(A)$, then its inverse $-g \in C_\T(A)$ as well. Thus, for the case of $\Z$-actions and $\R$-actions, conservative sets are ``symmetric" about 0, and therefore it is enough to study the positive numbers in $C_\T(A)$. A similar remark holds for conservative sets of $\Z^d$-actions.

\subsection{Rank-one actions}

\subsubsection{Rank-one transformations} \label{conservative sequences}

Many of our constructions will be rank-one cutting-and-stacking transformations defined on $[0,\infty)$. One constructs inductively a sequence of columns. A \emph{column} $C_n$ consists of a finite sequence of $h_n$ disjoint intervals $I_{n,0}, \dots, I_{n,h_n-1}$ of the same length, where we think of $I_{n,i+1}$ as sitting above $I_{n,i}$. The intervals composing $C_n$ are called \emph{levels} and $h_n$ is called the \emph{height} of $C_n$. When the context is clear, we may use $C_n$ to refer to the union of the levels in $C_n$. If $I_{n,k}$ is a level of $C_n$, its \emph{height} is $h(I) = k$. Moreover, if $I$ and $J$ are two levels in the same column, we define the \emph{distance} between $I$ and $J$ to be $d(I, J) := |h(I) - h(J)|$.

We will start with $C_0 = \{[0,1)\}$. Suppose $C_n$ is defined. To obtain $C_{n+1}$ from $C_n$, we cut $C_n$ into $r_n$ subcolumns and add $s_{n,i}$ ($i = 0, \dots, r_n-1$) spacers to the $i$-th subcolumn. We then obtain column $C_{n+1}$ by stacking each subcolumn on top of the subcolumn to its left. We choose each spacer interval so that it is disjoint from all previously chosen spacers and from $[0,1)$, and so that it abuts the previously chosen spacer (or, if it is the first spacer, so that it abuts $[0,1)$).

Each column $C_n$ is associated with a column map $T_{C_n}$ that sends each level in $C_n$ to the level immediately above it via the unique orientation-preserving translation. The rank-one transformation $T$ is then defined as the pointwise limit of the maps $T_{C_n}$. For our purposes, the number of spacers we add will always be so large that the resulting space $X$ will be $[0,\infty)$.

Let $A$ be a level of some column $C_n$ of height $h(A)$ (in $C_n$), and let $D(A,m)$, $m \geq n$, be the set of heights of copies of $A$ in $C_m$ (called the \emph{descendant set} of $A$ in column $C_m$). If we write $h_{n,k} := h_n + s_{n,k}$ (where $k=0,1,\dots,r_n-1$), then it is easy to see that
\[D(A,n+1) = \{h(A)\} \cup \left\{h(A) + \sum_{k=0}^i h_{n,k}: i = 0,\dots,r_n-2\right\}.\]
Define
\[H_{n} = \{0\} \cup \left\{\sum_{k=0}^i h_{n,k}: i=0,\dots,r_n-2\right\},\]
which is just equal to $D(J,n+1)$ where $J$ is the base level of $C_n$.
Then for $m > n$ we obtain
\[D(A,m) = h(A) + H_n \oplus H_{n+1} \oplus \dots \oplus H_{m-1}.\]
We let
\[C_T^m(A) := D(A,m) - D(A,m),\]
which corresponds to the ``return times of $A$" within column $C_m$. We note that $C_T^m(A) \subset C_T^{m+1}(A)$ for all $m$ and $C_T(A) = \bigcup_{m=n}^\infty C_T^m(A)$.

We now consider a special class of infinite rank-one transformations, which we call \emph{skyscraper transformations}. At stage $n$, we cut column $C_n$ into 2 subcolumns and we only add spacers to the right subcolumn 
and also
\[C_T^m(A) = \{c_n h_n + c_{n+1} h_{n+1} + \dots + c_{m-1} h_{m-1}: c_i = -1,0 \text{ or } +1\}.\]
right subcolumn. In this case, if $A$ is a level of $C_n$, then for all $m > n$, we have
\[D(A,m) = h(A) + \{0,h_n\} \oplus \{0,h_{n+1}\} \oplus \dots \oplus \{0,h_{m-1}\}\]

	\begin{figure}[htp!]
		\begin{tikzpicture}[scale=2]
			\draw (0,0) -- (1,0);
				\node at (0,-.25){0};
				\node at (.5,-.25){$C_0$};
				\node at (1,-.25) {1};
			\draw [->] (1.3,.25) -- (1.7,.25);
			\draw (2,0) -- (3,0);
				\draw (2.5,-.05) -- (2.5,.05); 
					\draw [->] (2.75, .05) -- (2.75,.15);
				\draw [dashed] (2.5,.2) -- (3, .2); 
					\draw [->] (2.75, .25) -- (2.75,.35);
				\draw [dashed] (2.5,.4) -- (3, .4);
				\draw [dotted] (2.75, .5) -- (2.75, .7);
				\draw [dashed](2.5, .8) -- (3,.8);
					\draw [->] (2.75, .85) -- (2.75,.95);
				\draw [dashed] (2.5, 1) -- (3, 1);
				
					\node at (2,-.25) {0};
					\node at (3,-.25) {1};
					
					\node at (2.5,-.85) {$C_0\to C_1$};	
				\draw [->] (3.3,.25) -- (3.7,.25);
				\draw (2.25,.25) -- (2.25, .5) -- (1.85,.5) -- (1.85,-.45) -- (2.75,-.45);
				\draw [->] (2.75, -.45) -- (2.75, -.15);
			\draw (4,0) -- (4.5,0);
			\draw (4,.2) -- (4.5,.2);
			\draw [dashed] (4, .4) -- (4.5,.4);
			\draw [dashed] (4,.6) -- (4.5,.6);
			\draw [dotted] (4.25,.7) -- (4.25,.9);
			\draw [dashed] (4, 1) -- (4.5,1);
			\draw [dashed] (4, 1.2) -- (4.5, 1.2);
				\node at (4.25,-.25){$C_1$};
				\node at (4, -.1){\tiny{0}};
				\node at (4.5,-.1){\tiny{1/2}};
				\node at (4,.1){\tiny{1/2}};
				\node at (4.5,.1){\tiny{1}};
		\end{tikzpicture}
		\bigskip
		\caption{Construction of column $C_1$ from column $C_0$ for a skyscraper transformation}
		\end{figure}
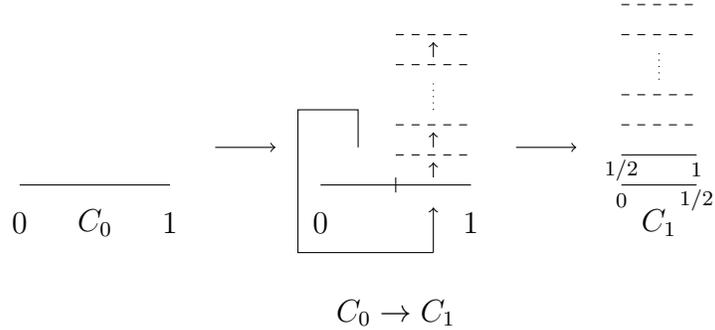

\bigskip	

\subsubsection{Skyscraper $\Z^d$-actions}

We present the analogue of skyscraper transformations for $\Z^d$-actions. The reader should note that when $d = 1$, the following description agrees with the one given in the previous section.

We follow the notation and terminology used in \cite{MRS99}. Given positive integers $\ell_1, \dots, \ell_d$, let $\SQ(\ell_1, \dots, \ell_d) := \{(x_1,\dots, x_d) \in \Z^d: 0 \leq x_i < \ell_i \text{ for each } i=1, \dots, d\}$, and define a \emph{grid} $G$ of dimensions $\ell_1 \times \dots \times \ell_d$ to be a bijection between $\SQ(\ell_1, \dots, \ell_d)$ and a collection of intervals of equal length. We call an interval $I$ in the range of $G$ a \emph{level}. When the context is clear, we may use $G$ to denote the union of the intervals in its range. We define the \emph{location} of $I \in G$ to be $\text{Loc}(I) := G^{-1}(I)$.

Fix positive integers $a_1, \dots, a_d$ and positive integers $h_n$ ($n \in \N$) such that $h_{n+1} \geq 2 h_n$. We describe the construction of a \emph{skyscraper $\Z^d$-action} with parameters $a_1, \dots, a_d$. Let $G_0$ consist of the interval $[0,1)$ (or more precisely $G_0$ is the map which sends the origin in $\Z^d$ to $[0,1))$. Now suppose $G_n$ has been defined and has domain $\SQ(h_{n,0}, \dots, h_{n,d-1})$. To obtain $G_{n+1}$, cut each interval in $G_n$ into $2^d$ subintervals of equal length. We now define $G_{n+1}$, which will be a bijection between $\SQ(a_1 h_{n+1}, \dots, a_d h_{n+1})$ and the collection of subintervals coming from the cuts together with new spacer intervals. Suppose an interval $I$ in $G_n$ has location $(x_1, \dots, x_d)$. Enumerate the subintervals of $I$ coming from the cuts as $I_0$, \dots, $I_{2_d - 1}$, and express each $j \in \{0, \dots, 2^d - 1\}$ by its binary representation
\[j = b_{j,0} + b_{j,1} \cdot 2 + b_{j,2} \cdot 2^2 + \dots + b_{j, d-1} \cdot 2^{d-1},\]
where $b_{j,k} \in \{0,1\}$ for each $k$. Then assign the subinterval $I_j$ to the location $(x_1, \dots, x_d) + (b_{j,0} h_{n,0}, \dots, b_{j, d-1} h_{n,d-1})$. Finally, we assign spacers to the elements of $\SQ(a_1 h_{n+1}, \dots, a_d h_{n+1})$ that have not yet been assigned intervals.

To each grid $G_n$ we associate \emph{grid maps} $T_{G_n}^{(1,\dots,0)}, \dots, T_{G_n}^{(0,\dots, 1)}$. Given an interval $I$ with location $(x_1, x_2, \dots, x_d)$, we define $T_{G_n}^{(1,\dots,0)}$ on $I$ to be the translation that maps $I$ to the interval with location $(x_1+1, x_2, \dots, x_d)$. If no such interval exists, then $T_{G_n}^{(1,\dots,0)}$ remains undefined. We define the other grid maps in an analogous manner. Finally, the basis transformations $T^{(1,\dots,0)}, \dots, T^{(0,\dots,1)}$ are defined as the pointwise limits of the respective grid maps. It is easy to see that all these basis transformations then commute, so they define a valid action of $\Z^d$ on $\bigcup_{n = 0}^\infty G_n$, which we call a \emph{skyscraper $\Z^d$-action} (with parameters $a_1, \dots, a_d$).

\subsubsection{Rank-one flows}

Since the construction of rank-one flows is similar to that of rank-one transformations, we omit certain details in the description that follows. Again, one constructs a sequence of columns $C_n$, where now a column consists of a finite sequence of \emph{blocks} of the form $B_i = [a_i,b_i) \times [c_i,d_i)$ having the same width $b_i-a_i$ (where $a_i, b_i, c_i, d_i \in \R$). The \emph{height} of $C_n$ is defined as
\[h_n := \sum_i (d_i - c_i).\]
Again, when the context is clear, we may use $C_n$ to refer to the union of the blocks composing it. Further one can think of a column as occupying the space of a rectangle $R_n = [0,\alpha_n) \times [0, \beta_n)$. One then obtains a natural bijection between the column $C_n$ (thought of as a union of blocks) and the rectangle $R_n$ which we can denote $\Loc$. Given $(x,y) \in C_n$, we call $\Loc(x,y)$ the \emph{location} of $(x,y)$. We use $\Loc_1(x,y)$ to denote the $x$-coordinate of $\Loc (x,y)$ and $\Loc_2(x,y)$ to denote the $y$-coordinate of $\Loc (x,y)$.

Let $C_0 = [0,1) \times [0,1)$. If $C_n$ has been defined, we cut $C_n$ into $r_n$ subcolumns and possibly add spacer blocks $S_{n,i}$ ($i = 0, \dots, r_n-1$) to the subcolumns. Then $C_{n+1}$ is obtained by stacking each subcolumn on top of the subcolumn to its left. If the measure of the spacer blocks is large enough, then the union of the columns $C_n$ will be a subset $X$ of $[0,\infty) \times [0, \infty)$ of infinite measure (so we may as well choose $X = [0, \infty) \times [0, \infty)$).

Each column $C_n$ is associated to a so-called column flow $\T = \{T_{C_n}^s\}_{s \in \R}$ defined by
\[T_{C_n}^s(x,y) := \Loc^{-1}(\Loc_1(x,y), \Loc_2(x,y) + s),\]
where $(x,y) \in C_n$. Of course, for any $s \not = 0$, $T_{C_n}^s$ is not defined on all of $C_n$. Our rank-one flow is obtained by letting $T^s$ be the pointwise limit of the column maps $T_{C_n}^s$.

If we always cut into 2, and only add spacer blocks to the right subcolumn, then we call the resulting flow a \emph{skyscraper flow}.

\section{Non-conservativity of product actions}

\subsection{$\Z^d$-actions}

\begin{definition}
Let $\T = \{T^\bfv\}_{\bfv \in \Z^d}$ be a nonsingular action of $\Z^d$ on $X$, and let $a_1, \dots, a_d > 0$. Let
\[E = \{(c_1 a_1, \dots, c_d a_d): c_i = -1, 0 \text{ or } 1 \text{ for each } i\}.\]
If for all $\bfv = (v_1, \dots, v_d) \in E$, the vector $\bfu_\bfv := \frac{1}{\text{gcd}(v_1, \dots, v_d)} \bfv$ is such that $T^{\bfu_\bfv}$ is ergodic, then we say that $\T$ is \emph{squarely ergodic w.r.t. $a_1, \dots, a_d$}. We say that $\T$ is \emph{squarely ergodic} if there exist $a_1, \dots, a_d > 0$ such that $\T$ is squarely ergodic w.r.t. $a_1, \dots, a_d$.
\end{definition}

In this section we prove the following theorem. As mentioned in the introduction, Schmidt and Walters \cite{ScWa82} prove a more general result in the sense that $\T$ is assumed to be a properly ergodic action of a locally compact second countable abelian group without a finite invariant measure, but they do not obtain that $\S$ is rank-one.  

\begin{theorem} \label{Zd actions}
Let $\T$ be an infinite measure-preserving action of $\Z^d$ on $X$ such that $\T$ is squarely ergodic. Then there exists a rank-one skyscraper $\Z^d$-action $\ES$ such that $\T \times \ES$ is not conservative.
\end{theorem}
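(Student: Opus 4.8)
The plan is to produce a wandering set for $\T \times \ES$ of the form $A \times B$, where $A \subset X$ has finite positive measure and $B$ is the base level $[0,1)$ of the skyscraper action $\ES$. The starting observation is that for every $\bfv \in \Z^d$ we have $(T\times S)^\bfv(A\times B)\cap(A\times B) = (T^\bfv A\cap A)\times(S^\bfv B\cap B)$, so the product conservative set factors as $C_{\T\times\ES}(A\times B) = C_\T(A)\cap C_\ES(B)$. Thus $A\times B$ is wandering precisely when $C_\T(A)\cap C_\ES(B) = \{\mathbf 0\}$, and it suffices to choose the heights $h_n$ of $\ES$ (which determine $C_\ES(B)$) together with the set $A$ so that this holds.

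First I would compute $C_\ES(B)$ explicitly. Exactly as in the one-dimensional skyscraper case, the descendant set of $B$ in the grid $G_m$ factors as a product $a_1 H_m \times \dots \times a_d H_m$, where $H_m = \{\sum_{n<m} b_n h_n : b_n \in \{0,1\}\}$, because at each stage the $2^d$ binary offsets range independently over the coordinate directions. Taking difference sets, $C_\ES^m(B) = \prod_{k=1}^d a_k(H_m - H_m)$ with $H_m - H_m = \{\sum_{n<m}c_n h_n : c_n\in\{-1,0,1\}\}$, and $C_\ES(B) = \bigcup_m C_\ES^m(B)$. The crucial feature is that every nonzero element first appearing at stage $n$ (i.e.\ lying in $C_\ES^{n+1}(B)\setminus C_\ES^n(B)$) has the form $\bfv = f + h_n \mathbf e$, where $f \in C_\ES^n(B)$ is bounded and $\mathbf e = (a_1 c_{n,1},\dots,a_d c_{n,d}) \in E\setminus\{\mathbf 0\}$; that is, the leading term of any element of $C_\ES(B)$ points in one of the finitely many corner directions of $E$. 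This is exactly where square ergodicity enters: for each such $\mathbf e$ the primitive map $T^{\mathbf u_{\mathbf e}}$ is ergodic, and since $\mathbf e = \gcd(\mathbf e)\,\mathbf u_{\mathbf e}$ we have $T^{h_n \mathbf e} = (T^{\mathbf u_{\mathbf e}})^{h_n\gcd(\mathbf e)}$.

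I would then build $\ES$ and $A$ together by induction. Fix $A_0$ of finite positive measure and a sequence $\delta_n$ with $\sum_n M_n\delta_n < \mu(A_0)$, where $M_n$ bounds the number of elements newly appearing at stage $n$. Given $h_0,\dots,h_{n-1}$ and $A_{n-1}$, note that since $T^{\mathbf u_{\mathbf e}}$ is an ergodic infinite measure-preserving transformation, the infinite ergodic theorem gives $\frac1N\sum_{k<N}\mu((T^{\mathbf u_{\mathbf e}})^k P\cap Q)\to 0$ for finite measure sets $P,Q$; hence $\{N : \mu((T^{\mathbf u_{\mathbf e}})^N P\cap Q) > \delta\}$ has density $0$, and, transferring to the progression of exponents $h_n\gcd(\mathbf e)$, the set of $h_n$ making $\mu(T^{f+h_n\mathbf e}A_{n-1}\cap A_{n-1})<\delta_n$ for all of the finitely many $f\in C_\ES^n(B)$ and $\mathbf e\in E\setminus\{\mathbf 0\}$ has density $1$. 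I choose such an $h_n$, large enough to also satisfy $h_n\geq 2h_{n-1}$, and set $A_n = A_{n-1}\setminus\bigcup_{\bfv}(A_{n-1}\cap T^{-\bfv}A_{n-1})$, the union over the newly appearing $\bfv$. This forces $\mu(T^\bfv A_n\cap A_n)=0$ for every new $\bfv$ while costing less than $M_n\delta_n$ in measure; since shrinking can only decrease earlier overlaps, the exact-zero property persists. Setting $A = \bigcap_n A_n$ yields $\mu(A)>0$ and $\mu(T^\bfv A\cap A)=0$ for all $\bfv\in C_\ES(B)\setminus\{\mathbf 0\}$, so $A\times B$ is wandering and $\T\times\ES$ is not conservative.

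The main obstacle is the passage from the Ces\`aro decay supplied by the infinite ergodic theorem, which only controls overlaps on a density-one set and never yields exact zeros for a fixed $A$, to an honest wandering set; the device that resolves this is to shrink $A$ slightly at each stage, removing the small returns to force exact disjointness while keeping the total measure removed summable. A secondary point requiring care is the density-one transfer of the good exponents from all of $\N$ to the progression $\{h_n\gcd(\mathbf e)\}$, together with verifying that the leading direction of every element of $C_\ES(B)$ genuinely lies in $E$, so that square ergodicity applies to each overlap we must control.
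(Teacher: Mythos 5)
Your proposal is correct and takes essentially the same approach as the paper: the same wandering rectangle $A \times B$ characterized by $C_{\T \times \ES}(A \times B) = \{\mathbf{0}\}$, the same recursion $C_\ES^{m+1}(B) = C_\ES^m(B) \oplus \{h_m \mathbf{e} : \mathbf{e} \in E\}$, the same corollary of the ergodic theorem applied to the primitive directions $\bfu_\bfv$ with the density-one transfer to their multiples, and the same device of shrinking $A_n$ by deleting the small returns so that the lost measure is summable. The only difference is organizational: the paper decouples the argument into the Adequate Gaps Lemma (which produces one set $A$ with gaps at every scale, independent of $\ES$, by fattening $A_n$ with a cube $[-n,n]^d$ of translates) followed by a separate inductive choice of the heights, whereas you interleave the two inductions and let the bounded part $f$ range over the already-determined finite set $C_\ES^n(B)$ rather than over a cube.
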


We first show that if $\T = \{T^\bfv\}_{\bfv \in \Z^d}$ admits a set $A$ of positive measure whose conservative set $C_\T(A)$ has certain gaps, then one can inductively construct a skyscraper $\Z^d$-action $\ES$ such that if $I$ is a level of the first grid $G_1$, then $C_\T(A) \cap C_\ES(I) = \{(0, \dots, 0)\}$. In other words, $C_{\T \times \ES}(A \times I) = \{(0, \dots, 0)\}$, and thus $\T \times \ES$ is not conservative. After that, we show that every $\Z^d$-action $\T$ that satisfies the conditions of Theorem \ref{Zd actions} admits such a set $A$.

\begin{definition}
Let $C \subset \Z^d$, and let $a_1, \dots, a_d > 0$. We say that $C$ has \emph{adequate gaps} w.r.t. $a_1, \dots, a_d$ if for every $n \in \mathbb{N}$, there exists $\ell_n \in \mathbb{N}$ such that
\[[\{(c_1 a_1 \ell_n, \dots, c_d a_d \ell_n): c_i \in \{-1,0,1\} \text{ for all } i\} \oplus [-n,n]^d] \cap C = \varnothing.\]
If $d = 1$, we may use the expression \emph{long gaps} instead. In this case, $C$ has long gaps if and only if $C$ is not a syndetic set.
\end{definition}

As for the case of transformations (see the end of section \ref{conservative sequences}), if $I$ is a level of grid $G_n$, then we can define the sets $C_\ES^m(I)$ as the set of times at which $I$ returns to itself within grid $G_m$ (where $m \geq n$). More precisely, if we let $D(I, m)$ (called the \emph{descendant set} of $I$ in column $C_m$) denote the set of locations of copies of $I$ in $G_m$, then we define $C_\T^m(I) := D(I,m) - D(I,m)$. Once again, we have $C_\ES^m(I) \subset C_\ES^{m+1}(I)$ for all $m \geq n$, and $C_\ES(I) = \bigcup_{m = n}^\infty C_\ES^m(I)$. 
The skyscraper $\Z^d$-actions $\ES$ mentioned in the next lemma will have parameters $a_1, \dots, a_d$. That is, each grid $G_n$ will be a $d$-dimensional rectangle with dimensions $a_1 h_n \times \dots \times a_d h_n$ for some $h_n \in \N$.

For these transformations, we have
\[ C_\ES^m(I) = C_1 \times \dots \times C_d, \]
where $C_i$ denotes the conservative sequence of $I$ under the basis transformation $T^{(0, \dots, 1, \dots, 0)}$, where the $1$ is at the $i$-th position. In other words,
\[C_i = \{c_1 a_i h_1 + \dots + c_m a_i h_m: c_i \in \{-1,0,1\} \text{ for all } i\}.\]
Hence, we can write
\begin{equation} \label{Zd conservative set}
C_\ES^{m+1}(I) = C_\ES^m(I) \oplus \{(c_1 a_1 h_m, \dots, c_d a_d h_m): c_i \in \{-1,0,1\} \}.
\end{equation}

\begin{lemma} \label{Construction of S}
Let $\T$ be a $\Z^d$-action on $X$ that admits a set $A$ of positive measure whose conservative set $C_\T(A)$ has adequate gaps w.r.t. $a_1, \dots, a_d$. Then there exists a skyscraper $\Z^d$-action $\ES$ with parameters $a_1, \dots, a_d$ such that $\T \times \ES$ is not conservative.
\end{lemma}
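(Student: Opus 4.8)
The plan is to construct $\ES$ inductively, choosing the heights $h_m$ one at a time so as to force $C_\ES(I)$ to avoid $C_\T(A)$ except at the origin. Fix the set $A$ given by the hypothesis, with $C_\T(A)$ having adequate gaps w.r.t. $a_1, \dots, a_d$. I will build a skyscraper $\Z^d$-action $\ES$ with parameters $a_1, \dots, a_d$ by specifying an increasing sequence of heights $h_1, h_2, \dots$, and I will take $I$ to be a fixed level of the first grid $G_1$ (say the base level). The target is to arrange that
\[
C_\T(A) \cap C_\ES(I) = \{(0, \dots, 0)\},
\]
since this gives $C_{\T \times \ES}(A \times I) = C_\T(A) \cap C_\ES(I) = \{\mathbf 0\}$, whence $A \times I$ is (essentially) a wandering set for $\T \times \ES$, so $\T \times \ES$ is not conservative.

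**The inductive construction.**
Because $C_\ES(I) = \bigcup_m C_\ES^m(I)$ and $C_\ES^m(I) \subset C_\ES^{m+1}(I)$, it suffices to guarantee that for each $m$,
\[
\bigl(C_\ES^{m+1}(I) \setminus C_\ES^m(I)\bigr) \cap C_\T(A) = \varnothing,
\]
with the base case ensuring $C_\ES^1(I) \cap C_\T(A) = \{\mathbf 0\}$. Suppose $h_1, \dots, h_m$ have been chosen so that $C_\ES^m(I) \cap C_\T(A) = \{\mathbf 0\}$. By the recursion \eqref{Zd conservative set},
\[
C_\ES^{m+1}(I) = C_\ES^m(I) \oplus \{(c_1 a_1 h_m, \dots, c_d a_d h_m): c_i \in \{-1,0,1\}\},
\]
so the new points are sums of an old return time with a vector of the form $(c_1 a_1 h_m, \dots, c_d a_d h_m)$. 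The finite set $C_\ES^m(I)$ is contained in some cube $[-N, N]^d$ for an $N$ depending only on $h_1, \dots, h_m$. Now invoke the adequate-gaps hypothesis with this $n := N$: there is $\ell_N \in \N$ such that the set
\[
\{(c_1 a_1 \ell_N, \dots, c_d a_d \ell_N): c_i \in \{-1,0,1\}\} \oplus [-N,N]^d
\]
is disjoint from $C_\T(A)$. Choosing $h_m := \ell_N$ (after possibly enlarging it to keep the sequence increasing and to respect $h_{m+1} \geq 2h_m$, which is compatible since one may always pass to a larger valid gap parameter) guarantees that every newly added point of $C_\ES^{m+1}(I)$ lies in the forbidden sumset and hence avoids $C_\T(A)$, apart from the origin already accounted for.

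**Closing the induction and concluding.**
Passing to the union over $m$, the construction yields $C_\ES(I) \cap C_\T(A) = \{\mathbf 0\}$. Then $C_{\T \times \ES}(A \times I) = \{\mathbf 0\}$: indeed, $\mu\bigl((T\times S)^{\mathbf v}(A \times I) \cap (A \times I)\bigr) = \mu(T^{\mathbf v}A \cap A)\cdot \mu(S^{\mathbf v}I \cap I)$, and for $\mathbf v \neq \mathbf 0$ at least one factor vanishes. Hence the positive-measure set $A \times I$ meets none of its nonzero translates, so it is wandering, and $\T \times \ES$ fails to be conservative.

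**Main obstacle.**
The step I expect to require the most care is verifying compatibility between the two constraints on the heights: the adequate-gaps parameter $\ell_N$ must be reconciled with the structural requirements of a skyscraper $\Z^d$-action (the monotonicity $h_{m+1}\geq 2h_m$ and the increasing heights). One must check that enlarging $\ell_N$ to a still-larger admissible value preserves the disjointness — this follows because the adequate-gaps condition supplies arbitrarily large such $\ell_n$ for each fixed $n$, or alternatively because one may first fix the combinatorial gap and then select the height within the allowed range. A secondary point to handle carefully is the bookkeeping that $C_\ES^m(I)$ indeed sits inside a cube of controlled size $N = N(h_1,\dots,h_m)$ before applying the hypothesis at scale $n = N$; since each coordinate of a return time is bounded by $a_i(h_1 + \dots + h_{m-1})$ times a constant, this bound is explicit and finite, so the induction is well-founded.
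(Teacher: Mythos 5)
Your proof is correct and follows essentially the same route as the paper's: an inductive choice of the heights $h_m$ via the recursion $C_\ES^{m+1}(I) = C_\ES^m(I) \oplus \{(c_1 a_1 h_m, \dots, c_d a_d h_m): c_i \in \{-1,0,1\}\}$, invoking the adequate-gaps hypothesis at the scale of a cube containing $C_\ES^m(I)$, and concluding from $C_{\T \times \ES}(A \times I) = C_\T(A) \cap C_\ES(I) = \{\mathbf{0}\}$ together with $C_\ES(I) = \bigcup_m C_\ES^m(I)$. Your extra bookkeeping (the explicit cube bound $N$, and reconciling $\ell_N$ with the requirement $h_{m+1} \geq 2 h_m$ by noting that valid gap parameters for a fixed $n$ are unbounded) merely makes explicit details the paper leaves to the reader, not a different method.
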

\begin{proof}
We construct the action $\ES$ so that if $I$ is the level of $G_1$ located at $(0, \dots, 0)$, then $C_{\T \times \ES}(A \times I) = \{(0, \dots, 0)\}$, which is equivalent to $C_\T(A) \cap C_\ES(I) = \{(0, \dots, 0)\}$. As mentioned, each grid of $\ES$ will be a $d$-dimensional rectangle with dimensions $a_1 h_n \times \dots \times a_d h_n$, so to define $\ES$ it is enough to inductively specify each $h_n$. We have $G_0 = \{[0,1)\}$ so $h_0 = 1$. Suppose $h_{n-1}$ has been defined so that $C_\ES^n(I) \cap C_\T(A) = \{(0, \dots, 0)\}$. Since
\[C_\ES^{n+1}(I) = C_\ES^n(I) \oplus \{(c_1 a_1 h_n, \dots, c_d a_d h_n): c_i \in \{-1,0,1\} \}\]
and $C_\T(A)$ has adequate gaps w.r.t. $a_1, \dots, a_d$, we can choose $h_n$ to ensure that $C_\ES^{n+1}(I) \cap C_\T(A) = \{(0, \dots, 0)\}$. This completes the proof since
\[C_S(I) = \bigcup_{m=1}^\infty C_S^m(I).\]
\end{proof}

\bigskip

\begin{figure}[htp!]
		\begin{tikzpicture} 
			\draw [->] (-3.25,0) -- (3.25,0);
			\draw [->] (0,-3.25) -- (0,3.25);
			\draw [ultra thick](-.5,-.5) rectangle (.5,.5);
			\draw [ultra thick](-3,-.5) rectangle (-2,.5);
			\draw [ultra thick](2,-.5) rectangle (3,.5);
			\draw [ultra thick](-.5,2) rectangle (.5,3);
			\draw [ultra thick](-.5,-3) rectangle (.5,-2);-.75,
			\draw [ultra thick](-3,2) rectangle (-2,3);
			\draw [ultra thick](2,2) rectangle (3,3);
			\draw [ultra thick](-3,-3) rectangle (-2,-2);
			\draw [ultra thick](2,-2) rectangle (3,-3);
				\draw [<->, thick] (-.75,0.1) -- (-.75,2.5);
					\node at (-1,1.5) {$h_n$};
				\draw [<->, thick] (.1,-.75) -- (2.5,-.75);
					\node at (1.5,-1) {$h_n$};
			\draw [ultra thick] (5,0) rectangle (6,1);
				\node at (5.5,-.8) {$G_n$};
			\draw [->, thick] (6.25,0) -- (6.75,0);
			\draw [<->, thick] (7,0) -- (7,4);
				\node at (6.65,2) {$h_n$};
			\draw [ultra thick, fill=gray] (7.2,0) rectangle (11.15,4);
				\node at (9,2.5) {SPACERS};
			\draw [ultra thick, fill=white] (7.2,0) rectangle (8.2,1);
				\node at (7.7,.5) {\tiny{$G_n^{(0,0)}$}};
			\draw [ultra thick, fill=white] (8.2,1) rectangle (9.2,2);
				\node at (8.7,1.5) {\tiny{$G_n^{(1,1)}$}};
			\draw [ultra thick, fill=white] (7.2,1) rectangle (8.2,2);
				\node at (7.7,1.5) {\tiny{$G_n^{(0,1)}$}};
			\draw [ultra thick, fill=white] (8.2,0) rectangle (9.2,1);
				\node at (8.7,.5) {\tiny{$G_n^{(0,1)}$}};
			\draw [<->, thick] (7.2,-.25) -- (11.15,-.25);
				\node at (9.5,-.6) {$h_n$};
		\end{tikzpicture}
		\bigskip
				\caption{The image on the left represents adequate gaps w.r.t. $a_1 = a_2 = 1$ for a $\Z^2$-action $\T$. In the above proof, we choose the numbers $h_n$ for $\ES$ so that if $C_S^n(I)$ is contained in the central square, then $C_S^{n+1}(I)$ will be contained in the union of the nine squares. The image on the right illustrates the construction of grid $G_{n+1}$ from grid $G_n$.}
		\end{figure}
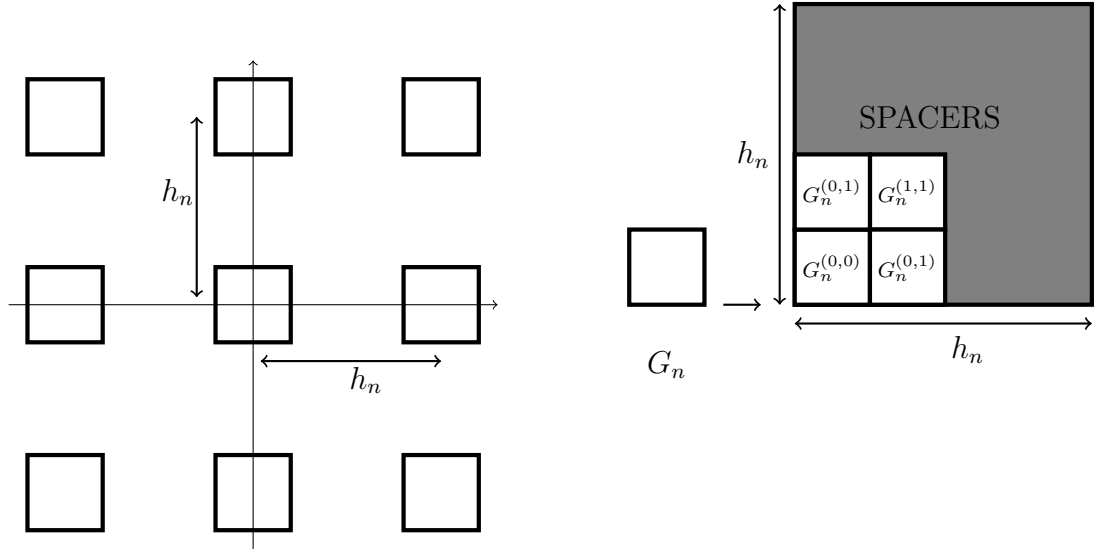

\bigskip

Lemma~\ref{adequate gaps} for the case of integer actions was proved by Hajian \cite{Ha64}
when giving a characterization of transformations not admitting a finite invariant measure, see also \cite{EHIP14}. Our proof generalizes their methods to obtain an analogous result for squarely ergodic $\Z^d$ actions. We let $E := \{(c_1 a_1, \dots, c_d a_d): c_i \in \{-1,0,1\}\}$. The main tool we use is the following corollary of the ergodic theorem:
\begin{proposition} \label{BET}
Let $T$ be an infinite measure-preserving ergodic transformation, and let $A, B \subset X$ of finite measure. Then
\[\lim_{n \to \infty} \frac{1}{n} \sum_{k=0}^{n-1} \mu(T^k A \cap B) = 0.\]
\end{proposition}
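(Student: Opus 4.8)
The plan is to prove Proposition~\ref{BET} by reducing the claim for general finite-measure sets $A,B$ to the case of indicator functions and then invoking the Hopf ratio ergodic theorem (or rather its consequence for infinite measure-preserving transformations). First I would observe that it suffices to prove the statement when $A$ and $B$ are the \emph{same} set of finite positive measure, since by the Cauchy--Schwarz inequality (applied to the nonnegative quantities $\mu(T^k A \cap B)$, or more directly by the bound $\mu(T^k A \cap B) \le \sqrt{\mu(T^k A \cap A)}\sqrt{\mu(T^k B \cap B)}$ after passing through the transfer operator), the mixed Cesàro averages are controlled by the diagonal ones. Alternatively, and more cleanly, I would just treat $A$ and $B$ directly and estimate $\frac{1}{n}\sum_{k=0}^{n-1}\mu(T^k A \cap B)$ from above.

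The key analytic input is that for an infinite measure-preserving transformation, the partial sums $\sum_{k=0}^{n-1} \mathbf{1}_B(T^k x)$ grow \emph{sublinearly} in $n$ for a.e. $x$. Concretely, I would write
\[
\frac{1}{n}\sum_{k=0}^{n-1} \mu(T^k A \cap B) = \frac{1}{n}\sum_{k=0}^{n-1}\int_X \mathbf{1}_A(x)\,\mathbf{1}_B(T^k x)\,d\mu(x) = \int_A \frac{1}{n}\sum_{k=0}^{n-1}\mathbf{1}_B(T^k x)\,d\mu(x),
\]
using that $T$ is measure-preserving to rewrite $\mu(T^kA\cap B)=\mu(A\cap T^{-k}B)=\int_A \mathbf{1}_B(T^k x)\,d\mu$. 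The inner average $\frac{1}{n}\sum_{k=0}^{n-1}\mathbf{1}_B(T^k x)$ converges to $0$ for a.e.\ $x$: this is the standard fact that the time-average of an $L^1$ function along an ergodic \emph{infinite} measure-preserving transformation tends to zero almost everywhere, which follows from the Hopf ratio ergodic theorem together with the fact that the total mass is infinite (so the "denominator" of the ratio diverges). Then I would conclude by the dominated convergence theorem on the finite-measure set $A$, dominating the inner averages by the constant $1$, which is integrable over $A$ since $\mu(A)<\infty$.

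The main obstacle is justifying the almost-everywhere decay $\frac{1}{n}\sum_{k=0}^{n-1}\mathbf{1}_B(T^k x)\to 0$ and making the interchange of limit and integral rigorous. The cleanest route is to cite Hopf's ratio ergodic theorem: for any $f,g\in L^1(\mu)$ with $\int g\,d\mu\ne 0$, the ratio $\big(\sum_{k=0}^{n-1} f(T^k x)\big)\big/\big(\sum_{k=0}^{n-1} g(T^k x)\big)$ converges a.e.\ to $\int f\,d\mu/\int g\,d\mu$; choosing $f=\mathbf{1}_B$ and $g$ a fixed positive $L^1$ function whose ergodic sums $\sum_{k=0}^{n-1} g(T^k x)$ diverge to $+\infty$ (which they do a.e.\ precisely because $\mu$ is infinite and $T$ is ergodic and conservative), one gets that $\sum_{k=0}^{n-1}\mathbf{1}_B(T^k x)$ is $o\big(\sum_{k=0}^{n-1} g(T^k x)\big)$, and in particular $\frac{1}{n}\sum_{k=0}^{n-1}\mathbf{1}_B(T^k x)\to 0$ a.e. Once this pointwise statement is in hand, dominated convergence over $A$ (with dominating function $1\in L^1(A)$) finishes the argument immediately. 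I expect the only subtlety to be confirming the divergence of the normalizing ergodic sums, which is exactly where the hypothesis that $\mu$ is infinite (equivalently, that $T$ admits no finite invariant measure) enters, and this is precisely the setting in which the proposition is applied.
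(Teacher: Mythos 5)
Your overall reduction is the right one, and it is in fact the argument the paper leaves implicit: Proposition~\ref{BET} is stated there without proof, as a known corollary of the (infinite) ergodic theorem. Unfolding $\mu(T^kA\cap B)=\mu(A\cap T^{-k}B)$ to write the Ces\`aro average as $\int_A \frac{1}{n}\sum_{k=0}^{n-1}\mathbf{1}_B(T^kx)\,d\mu(x)$, proving the inner averages tend to $0$ a.e., and concluding by dominated convergence on the finite-measure set $A$ with dominating function $1$ is exactly the standard route, and your DCT step is fine. The genuine gap is in your justification of the pointwise decay. Writing $S_nf(x):=\sum_{k=0}^{n-1}f(T^kx)$, Hopf's ratio theorem with $f=\mathbf{1}_B$ and a positive $g\in L^1$ gives $S_nf/S_ng\to\mu(B)/\int g\,d\mu$ a.e., which is a \emph{nonzero finite} constant when $\mu(B)>0$; so $S_nf$ is comparable to $S_ng$, not $o(S_ng)$ as you assert. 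Worse, even if you had $S_nf=o(S_ng)$, the mere a.e.\ divergence $S_ng\to\infty$ says nothing about $S_ng/n$: the input you would need, namely $S_ng=o(n)$ a.e., is precisely the statement you are trying to prove (with $g$ in place of $\mathbf{1}_B$), so the ``in particular'' is a non sequitur as written. (Also, the Cauchy--Schwarz bound $\mu(T^kA\cap B)\le\sqrt{\mu(T^kA\cap A)}\sqrt{\mu(T^kB\cap B)}$ floated in your first paragraph is not justified, but it is harmless since you abandon it.)

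There are two clean repairs. (i) Bypass the ratio theorem: Birkhoff's pointwise ergodic theorem is valid for measure-preserving transformations of $\sigma$-finite spaces, so $\frac{1}{n}S_n\mathbf{1}_B$ converges a.e.\ to an invariant function, which by ergodicity is a constant $c\ge 0$; since $\int \frac{1}{n}S_n\mathbf{1}_B\,d\mu=\mu(B)$ for every $n$, Fatou's lemma gives $c\,\mu(X)\le\mu(B)<\infty$, and $\mu(X)=\infty$ forces $c=0$. (ii) Alternatively, salvage your Hopf route: since $\mu(X)=\infty$, choose $B'\supset B$ with $\mu(B')<\infty$ arbitrarily large and take $g=\mathbf{1}_{B'}$; then $S_n\mathbf{1}_{B'}\le n$ trivially, so $\frac{1}{n}S_n\mathbf{1}_B\le S_n\mathbf{1}_B/S_n\mathbf{1}_{B'}\to \mu(B)/\mu(B')$ a.e.\ (conservativity, which is automatic here since $T$ is ergodic on a nonatomic $\sigma$-finite space as noted in Section~2, guarantees $S_n\mathbf{1}_{B'}\to\infty$ a.e., so the ratio theorem applies), and letting $\mu(B')\to\infty$ along a countable sequence of such sets yields $\frac{1}{n}S_n\mathbf{1}_B\to 0$ a.e. Either patch completes your proof; the point is that the hypothesis $\mu(X)=\infty$ must enter through a comparison of the invariant limit (or the reference sums) with the total mass, not through divergence of $S_ng$ alone.
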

Since $\{\mu(T^k A \cap B)\}_{k \in \N}$ is bounded, it follows that for the sets $A, B$ above, $\mu(T^n A \cap B) \to 0$ in density 1 (see \cite{Si08} for example). Recall that if $\bfv = (v_1, \dots, v_d) \in \Z^d$, then we let $\bfu_\bfv := \frac{1}{\text{gcd}(v_1, \dots, v_d)} \bfv$.


\begin{lemma}[Adequate Gaps] \label{adequate gaps}
Let $\T$ be an infinite measure-preserving $\Z^d$-action that is squarely ergodic w.r.t. $a_1, \dots, a_d$, i.e. each transformation $T^{\bfu_\bfv}$, $\bfv \in E$, is ergodic. Let $A^* \subset X$ with $0 < \mu(A^*) < \infty$, and fix $0 < \ve < \mu(A^*)$. Then there exists a set $A \subset A^*$, $\mu(A) > \mu(A^*) - \ve$, such that $A$ has adequate gaps w.r.t. $a_1, \dots, a_d$.
\end{lemma}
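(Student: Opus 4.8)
The plan is to obtain $A$ by deleting from $A^*$ a carefully chosen family of small ``collision sets,'' one for each relevant lattice point, so that the total deleted measure stays below $\ve$ while every such point is forced out of $C_\T(A)$. I first note that $(0,\dots,0)$ always lies in $C_\T(A)$, so the substance of the claim is that for each $n$ one can find $\ell_n$ for which every \emph{nonzero} point of $\ell_n E \oplus [-n,n]^d$, i.e. every point $\ell_n \bfv + \bfw$ with $\bfv \in E \setminus \{0\}$ and $\bfw$ an integer point of $[-n,n]^d$, avoids $C_\T(A)$; this is precisely what the inductive step of Lemma~\ref{Construction of S} consumes.

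The elementary engine I would use is that a single return time can be killed cheaply. For $g \in \Z^d$ set $B_g := A^* \cap T^{-g}A^*$; then $\mu(B_g) = \mu(T^g A^* \cap A^*)$ by measure preservation, and if $A \subset A^* \setminus B_g$ then $g \notin C_\T(A)$, because $x \in A^* \setminus B_g$ forces $T^g x \notin A^* \supset A$. Consequently, if I let $A := A^* \setminus \bigcup_g B_g$, the union ranging over all the nonzero shell points I wish to exclude, then none of those points lies in $C_\T(A)$, while $\mu(A^* \setminus A) \le \sum_g \mu(T^g A^* \cap A^*)$. Everything thus reduces to choosing the $\ell_n$ so that this sum is $< \ve$.

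To make the sum small, fix $\ve_n > 0$ with $\sum_n \ve_n < \ve$, and for each $n$ put $K_n := |E \setminus \{0\}|\,(2n+1)^d$, the number of nonzero level-$n$ shell points. For fixed $n$ I need $\ell_n$ with $\mu(T^{\ell_n \bfv + \bfw}A^* \cap A^*) < \ve_n/K_n$ for all $\bfv \in E \setminus \{0\}$ and all integer $\bfw \in [-n,n]^d$. Writing $\bfv = g_\bfv \bfu_\bfv$ with $g_\bfv = \gcd(v_1,\dots,v_d)$, one has $T^{\ell \bfv + \bfw} = (T^{\bfu_\bfv})^{\ell g_\bfv}T^\bfw$; since $T^{\bfu_\bfv}$ is ergodic by square ergodicity and $T^\bfw A^*, A^*$ are finite, Proposition~\ref{BET} gives $\mu((T^{\bfu_\bfv})^k T^\bfw A^* \cap A^*) \to 0$ in density $1$ over $k$. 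Because $\{\ell : g_\bfv \ell \in D\}$ has density $0$ whenever $D \subset \N$ does, the set of $\ell$ making the measure at $k = g_\bfv\ell$ fall below $\ve_n/K_n$ has density $1$ for each pair $(\bfv,\bfw)$; intersecting these finitely many density-$1$ sets leaves a density-$1$ set of admissible $\ell_n$, in particular a nonempty one.

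Performing this for every $n$ and taking $A := A^* \setminus \bigcup_n \bigcup_g B_g$ (inner union over the nonzero level-$n$ shell points for the chosen $\ell_n$) yields $\mu(A^* \setminus A) \le \sum_n K_n \cdot \ve_n/K_n = \sum_n \ve_n < \ve$, with every nonzero shell point excluded from $C_\T(A)$; that is exactly adequate gaps. I expect the main obstacle to be the simultaneity in the third step: producing one $\ell_n$ that works across all nonzero directions $\bfv$ and all perturbations $\bfw$ at once. This is where square ergodicity is essential, via the primitive vectors $\bfu_\bfv$, and where the transfer from density-$1$ convergence along $T^{\bfu_\bfv}$ to density-$1$ along the subprogression $\{g_\bfv\ell\}$ must be justified before appealing to the stability of density $1$ under finite intersections.
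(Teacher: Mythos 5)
Your proof is correct, and its analytic engine is the one the paper uses: Proposition \ref{BET}, the upgrade to convergence in density $1$, the transfer from the primitive direction $\bfu_\bfv$ to the progression $k = g_\bfv \ell$, and the stability of density $1$ under finite intersections. Indeed, your observation that $\{\ell : g_\bfv \ell \in D\}$ has density $0$ whenever $D$ does is precisely the justification the paper elides when it passes from (\ref{crucial step}) to (\ref{crucial step 2}) with ``it is not hard to see.'' Where you genuinely differ is the deletion scheme. The paper runs a nested induction: it shrinks $A_0 = A^* \supset A_1 \supset A_2 \supset \dots$, fattens $A_n$ into $B_n = \bigcup_{|k_i| \le n+1} T^{(k_1,\dots,k_d)}(A_n)$ so that the whole cube of perturbations is absorbed into one set, removes the forward images $T^{\ell_{n+1}\bfv}(B_n)$ with budget $\ve/2^{n+1}$, and takes $A = \bigcap_n A_n$; in particular the choice of $\ell_{n+1}$ depends on the previously constructed $A_n$. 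You instead delete, in one shot from the fixed set $A^*$, the collision sets $B_g = A^* \cap T^{-g}A^*$ for each nonzero shell point $g = \ell_n\bfv + \bfw$, with a per-point budget $\ve_n/K_n$; this is valid because $A' \subset A$ implies $C_\T(A') \subset C_\T(A)$, so later deletions cannot reintroduce an excluded $g$, and it makes the choices of $\ell_n$ for different $n$ completely independent. Your version is simpler (no induction, and the measure control $\mu(B_g) = \mu(T^g A^* \cap A^*)$ is explicit per point); the paper's fattening buys a single smallness condition per direction $\bfv$ rather than $(2n+1)^d$ of them, and yields increasing $\ell_n$, though neither is actually needed for adequate gaps. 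Finally, your restriction to \emph{nonzero} $\bfv$ is not just permissible but necessary: read literally with all $c_i = 0$, the definition of adequate gaps would require $[-n,n]^d \cap C_\T(A) = \varnothing$, impossible since $(0,\dots,0) \in C_\T(A)$, and the paper's own removal step with $\bfv = 0 \in E$ would delete $B_n \supset A_n$ entirely; your reading is the intended one and is exactly what the inductive step of Lemma \ref{Construction of S} consumes.
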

\begin{proof}
Let $A_0 = A^*$ and $\ell_0 = 0$, and fix $0 < \ve < \mu(A^*)$. We construct
\[A_0 \supset A_1 \supset A_2 \supset \dots\]
\[\ell_0 < \ell_1 < \ell_2 < \dots\]
such that for every $n \in \N$ and for every $\bfv \in E$, we have
\[ T^{\ell_n \bfv + (k_1, \dots, k_d)}(A_n) \cap A_n = \varnothing\]
for all $k_1, \dots, k_d \in \{-n, \dots, n\}$. Then we will take
\[A = \bigcap_{n=1}^\infty A_n.\]

Suppose $A_n$ and $\ell_n$ are defined. Let
\[B_n := \bigcup_{k_1, \dots, k_d \in \{-n-1, \dots, n+1\}} T^{(k_1, \dots, k_d)}(A_n).\]
By the corollary of the infinite ergodic theorem, for each $\bfu_\bfv$, $\bfv \in E$, we have
\begin{equation} \label{crucial step}
\lim_{m \to \infty} \mu(T^{m \bfu_\bfv}(B_n) \cap A_n) = 0 \text{ in density 1}.
\end{equation}
It is not hard to see that this implies that
\begin{equation} \label{crucial step 2}
\lim_{m \to \infty} \mu(T^{m \bfv}(B_n) \cap A_n) = 0 \text{ in density 1}.
\end{equation}
Since the intersection of finitely many sets of density 1 still has density 1, we conclude that there exists $\ell_{n+1} > \ell_n$ such that
\[\mu( \bigcup_{\bfv \in E} T^{\ell_{n+1} \bfv}(B_n) \cap A_n) < \frac{\ve}{2^{n+1}}.\]
Now define
\[A_{n+1} := A_n \setminus \bigcup_{\bfv \in E} T^{\ell_{n+1} \bfv}(B_n).\]
For each $\bfv \in E$ and for each $k_1, \dots, k_d \in \{-n-1, \dots, n+1\}$, we have
\[T^{\ell_{n+1} \bfv + (k_1, \dots, k_d)}(A_{n+1}) \cap A_{n+1} \subset T^{\ell_{n+1} \bfv + (k_1, \dots, k_d)}(A_{n}) \setminus \bigcup_{\bfv \in E} T^{\ell_{n+1} \bfv}(B_n) = \varnothing.\]
Moreover, the fact that $\mu(A_{n+1}) > \mu(A_n) - \frac{\ve}{2^{n+1}}$ for each $n$ ensures that
\[\mu(A) = \mu \left(\bigcap_{n=0}^\infty A_n \right) > \mu(A^*) - \ve.\]
\end{proof}

\begin{remark}
Since the proof of the Adequate Gaps Lemma remains valid for non-invertible transformations, Theorem \ref{Zd actions} is also true in the non-invertible case. The same goes for certain actions of semigroups of $\Z^d$.
\end{remark}

\subsection{Extensions to $d=1$, i.e. transformations}

In this context we can actually prove the following more general result.

\begin{theorem}
Let $T$ be a nonsingular transformation with no finite invariant measure. Then there exists a rank-one transformation $S$ such that $T \times S$ is not conservative. Moreover, $S$ can be chosen to be rigid or have infinite ergodic index.
\end{theorem}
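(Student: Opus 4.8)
The plan is to reduce the theorem to the one-dimensional instance of Lemma~\ref{Construction of S}. That lemma shows that if $T$ admits a set $A$ of positive measure whose conservative sequence $C_T(A)$ has long gaps (equivalently, is not syndetic), then one can inductively build a rank-one skyscraper $S$ such that, taking $I$ to be the base level of $G_1$, one has $C_T(A) \cap C_S(I) = \{0\}$, whence $C_{T \times S}(A \times I) = \{0\}$ and $T \times S$ is not conservative. So the entire problem becomes: produce a positive-measure set $A$ with $C_T(A)$ non-syndetic, using only that $T$ is nonsingular with no finite invariant measure.

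For the existence of such an $A$ I would separate two regimes. When $T$ is infinite measure-preserving and ergodic, the Adequate Gaps Lemma (Lemma~\ref{adequate gaps} with $d=1$) already delivers $A$, its proof resting on the density-$1$ decay $\mu(T^n B \cap A) \to 0$ from Proposition~\ref{BET}. For a general nonsingular $T$ with no finite invariant measure one cannot appeal to the ergodic theorem, and here I would invoke Hajian's characterization \cite{Ha64}: a nonsingular transformation admits an equivalent finite invariant measure if and only if $C_T(A)$ is syndetic for every $A$ of positive measure (the forward direction being Khintchine recurrence in the finite-measure-preserving factor). Hence the hypothesis that $T$ has no finite invariant measure furnishes exactly the set $A$ with $C_T(A)$ non-syndetic that Lemma~\ref{Construction of S} requires, and the first assertion of the theorem follows.

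For the ``moreover'' with $S$ rigid, the skyscraper of Lemma~\ref{Construction of S} is only partially rigid (with constant $1/2$), since cutting into two sends half of each level out of any fixed set; full rigidity forces the number of cuts $r_n \to \infty$. I would therefore cut $C_n$ into $r_n$ subcolumns with $r_n \to \infty$ and add a common number $s_n$ of spacers to each subcolumn, so that $S^{h_n + s_n}$ cleanly shifts subcolumn $i$ onto subcolumn $i+1$ and $\mu(S^{h_n+s_n}A \cap A) \ge (1 - 1/r_n)\mu(A) \to \mu(A)$, giving rigidity along $t_n = h_n + s_n$. The cost is that the descendant set now contributes the arithmetic progression $\{j(h_n+s_n): 0 \le j \le r_n - 1\}$, so the gap-avoidance step needs a radius-$M$ neighborhood of this progression to miss $C_T(A)$. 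This is the main obstacle: I must strengthen the choice of $A$ so that $C_T(A)$ admits arbitrarily long \emph{arithmetic-progression gaps}. In the infinite measure-preserving ergodic case this is within reach of the Adequate Gaps method, because for fixed $L$ the set of $D$ with $\max_{1 \le j \le L}\mu(T^{jD}B \cap A) < \varepsilon$ still has density $1$ (a finite intersection of density-$1$ sets, using that $D \mapsto jD$ pulls back density-$1$ sets to density-$1$ sets), so one can delete a neighborhood of the whole progression $\{jD\}$ and proceed as before; for general nonsingular $T$ I would instead extract $A$ from a weakly wandering set whose return-time sequence contains arbitrarily long arithmetic progressions, as provided by the refinements of Hajian's construction in \cite{EHIP14}.

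Finally, for $S$ of infinite ergodic index I would start from a known rank-one construction all of whose Cartesian powers are ergodic, retaining the freedom to take each height $h_n$ as large as desired. Since $T \times S$ being non-conservative constrains only $C_S(I)$ relative to the fixed set $C_T(A)$ — and says nothing about the self-products $S \times \cdots \times S$ — there is no conflict in principle between infinite ergodic index and non-conservativity of $T \times S$. The work is again to place the descendant heights inside the gaps of $C_T(A)$ while preserving whatever spacer pattern drives ergodicity of the powers; taking the gaps of $C_T(A)$ long enough, and choosing each new height deep inside a fresh gap, lets the induction of Lemma~\ref{Construction of S} run simultaneously with the infinite-ergodic-index bookkeeping. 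I expect the genuine difficulty throughout to be this simultaneity — forcing the full return-time structure of a rigid, or power-ergodic, rank-one map into the prescribed gaps of $C_T(A)$ — rather than either property in isolation.
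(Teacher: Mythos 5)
Your overall architecture is the paper's: reduce to the $d=1$ instance of Lemma~\ref{Construction of S} by producing a set $A$ whose conservative sequence $C_T(A)$ has the right gaps; for plain non-conservativity, long gaps (non-syndeticity) suffice; for the rigid version, pass to $r_n \to \infty$ cuts, which enlarges the return-time sets to $C_S^{m+1}(I) = C_S^m(I) \oplus \{c\,t_m : |c| \le r_m - 1\}$ and so requires gaps around whole thickened arithmetic progressions $[k\ell - n, k\ell + n]$, $1 \le k \le n$. Your density argument for those gaps --- intersecting the sets $\{D : \mu(T^{jD}B \cap A) < \varepsilon\}$ for $1 \le j \le L$, each of density $1$ because $j\Z$ has positive density --- is exactly the paper's Lemma~\ref{density lemma} mechanism, and your treatment of infinite ergodic index (run a power-ergodic rank-one construction with heights placed in gaps) is at the paper's own level of detail: the paper uses the construction of \cite{AdFrSi01}, reads off from $C_S^{n+1}(I) = C_S^n(I) \oplus \{c\sum_{k=i}^j h_{n,k} : 0 \le i \le j \le 2,\ c \in \{-1,0,1\}\}$ that gaps are needed around $\ell_m$, $5m\ell_m$, $6m\ell_m$ and $(6m+1)\ell_m$, and all of these are already supplied by your AP-gaps statement with parameter $6m+1$.

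The genuine gap is your handling of general nonsingular $T$ in the ``moreover'' clause. For part (1) you quote Hajian's syndeticity characterization, which hands you long gaps but no quantitative decay with which to run the AP-gaps induction; and your proposed substitute for the rigid case --- a weakly wandering set whose return-time sequence contains long arithmetic progressions, attributed to \cite{EHIP14} --- both misidentifies what is needed and is unsupported. If $W$ is weakly wandering along a sequence containing a progression $\{a + k\ell\}_{0 \le k \le m}$, disjointness of the images only yields $k\ell \notin C_T(W)$ for $1 \le k \le m$, i.e.\ gaps of radius zero; your construction needs the whole thickened progression $\bigcup_{k=1}^{m}[k\ell - m, k\ell + m]$ to miss $C_T(A)$, which would require weak wandering along thickened progressions, and no such refinement is available off the shelf in \cite{EHIP14}. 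The paper's fix is simpler and uniform: by the characterization in \cite{EHIP14}, no finite invariant measure gives a set $A$ with $\frac{1}{n}\sum_{k=0}^{n-1}\mu(T^kA) \to 0$, hence (normalizing $\mu$ to an equivalent probability) $\mu(T^kA) \to 0$ in density $1$; since $\mu(T^m B_n \cap A_n) \le \sum_j \mu(T^{m+j}A)$ for $B_n = \bigcup_j T^j(A_n)$, the same density-lemma argument you used in the measure-preserving ergodic case then runs verbatim for nonsingular $T$, with no appeal to the ergodic theorem. Replacing your syndeticity black box by this Ces\`aro characterization at the outset repairs the proposal. (A minor point: your claim that rigidity \emph{forces} $r_n \to \infty$ is not true in general --- irrational rotations of bounded type are rigid rank-one maps with bounded cuts --- but since you only use the sufficiency of your $r_n \to \infty$ construction, this is harmless; the paper's variant, with spacers only on the rightmost subcolumn and rigidity along $h_n$, is the same construction up to where the spacers sit.)
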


For clarity, we will prove the following two results in turn.
\begin{enumerate}
\item There exists a skyscraper transformation $S$ such that $T \times S$ is not conservative.
\item The transformation $S$ can be chosen to be rigid or have infinite ergodic index.
\end{enumerate}

\begin{proof}[Proof of (1)]
It suffices to show that there exists a set $A$ of positive measure for $T$ such that $C_T(A)$ has long gaps. To guarantee this, we use the following characterization of a transformation with no finite invariant measure for which the reader can refer to  \cite{EHIP14}:
\[\text{There exists a set $A$, $\mu(A) > 0$, such that } \frac{1}{n} \sum_{k=0}^{n-1} \mu(T^k A) \to 0.\]
Again, this implies that for this set $A$, $\mu(T^k A) \to 0$ in density 1 (since, without loss of generality, we can assume that $\mu$ is a nonsingular probability measure). We can then proceed as in the proof of the Adequate Gaps Lemma to obtain the desired gaps.
\end{proof}

For the proof of (2), we use the following easy lemma, whose proof is left to the reader.

\begin{lemma} \label{density lemma}
Suppose $A \subset \N$ has density 1, and suppose $B = \{b_m\}_{m \in \N} \subset \N$ has positive density, i.e.
\[\lim_{n \to \infty} \frac{|B \cap \{1, \dots, n\}|}{n}\]
exists and is positive. Then the set of $m \in \N$ such that $b_m \in A$ forms a set of density 1.
\end{lemma}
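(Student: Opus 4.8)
The plan is to pass to complements and reduce everything to a single counting estimate. Write $A^c := \N \setminus A$; since $\text{Dens}(A) = 1$, the set $A^c$ has density $0$. Let $M := \{m \in \N : b_m \in A\}$, and assume, as we may, that $B = \{b_m\}$ is enumerated in increasing order, so that $m \le N$ implies $b_m \le b_N$. To prove $\text{Dens}(M) = 1$ it suffices to show that the complementary index set $M^c = \{m : b_m \in A^c\}$ has density $0$, that is, $|\{m \le N : b_m \in A^c\}|/N \to 0$ as $N \to \infty$.

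The key observation is that positive density of $B$ controls the magnitude of $b_N$ in terms of its index $N$. Let $\delta > 0$ denote the density of $B$. Since $b_N$ is the $N$-th element of $B$, we have $|B \cap \{1, \dots, b_N\}| = N$; dividing by $b_N$ and invoking $\lim_{n} |B \cap \{1, \dots, n\}|/n = \delta$ yields $N/b_N \to \delta$, so $b_N \sim N/\delta$, and in particular $b_N = O(N)$.

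With this in hand I would bound the relevant count directly. Every $b_m$ with $m \le N$ satisfies $b_m \le b_N$, so the indices $m \le N$ landing in $A^c$ correspond to distinct values in $A^c \cap \{1, \dots, b_N\}$, giving
\[
|\{m \le N : b_m \in A^c\}| \le |A^c \cap \{1, \dots, b_N\}|.
\]
Since $A^c$ has density $0$, the right-hand side is $o(b_N)$, and combining this with $b_N = O(N)$ shows it is $o(N)$. Dividing by $N$ and letting $N \to \infty$ gives $\text{Dens}(M^c) = 0$, hence $\text{Dens}(M) = 1$, as desired.

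This argument is essentially a squeeze, so there is no serious obstacle. The only point requiring care is the bookkeeping that converts positive density of $B$ into the asymptotic $b_N \sim N/\delta$: this is precisely what lets us pass from a bound of order $o(b_N)$ on the \emph{values} back to a bound of order $o(N)$ on the \emph{indices}. One should also verify at the outset that the enumeration of $B$ is increasing, so that $m \le N$ forces $b_m \le b_N$; if $B$ is presented unordered, one simply reindexes it in increasing order first.
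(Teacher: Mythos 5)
Your proof is correct. The paper leaves this lemma's proof to the reader, so there is no official argument to compare against; what you give---passing to complements and using $b_N \sim N/\delta$ to convert the $o(b_N)$ bound on values $A^c \cap \{1,\dots,b_N\}$ into an $o(N)$ bound on indices---is exactly the standard argument the authors intend, and your caveat about taking the increasing enumeration is apt (the statement would actually be false for an arbitrary enumeration of $B$, but in the paper's application $B = k\N$ carries its natural ordering).
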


\begin{proof}[Proof of (2)]
We work with the assumption that $T$ is an infinite measure-preserving ergodic transformation. The same arguments work for nonsingular transformations with no finite invariant measure with only minor modifications.

We first show that $S$ can be chosen to be a rigid rank-one transformation. Let $C_0 = [0,1)$. Suppose $C_n$ is defined. To obtain $C_{n+1}$ cut $C_n$ into $r_n$ subcolumns and only add spacers on top of the right subcolumn. We impose that $\limsup_{n \to \infty} r_n = \infty$. The resulting rank-one transformation $S$ can be finite or infinite depending on the number of spacers added at each stage. It is easy to see, though, that $S$ is rigid along the sequence $n_i = h_i$ of heights. For simplicity, we will take $r_n = n$ below.

Let $I$ denote the base level of column $C_1$. Then for $m \geq 1$ we have
\[C_S^{m+1}(I) = C_S^m(I) \oplus \{c_{m} h_{m}: c_{m} \in \{-r_{m}+1,\dots, r_{m}-1\}\}.\]
We wish to inductively construct $S$ (by defining the heights $h_i$) such that for some set $A$, $C_T(A) \cap C_S^n(I) = \{0\}$ for all $n$. The idea is to find the right notion of gaps for $C_T(A)$ and prove a suitable version of the Adequate Gaps Lemma. More precisely, we require the following gaps: for every $n > 0$, there exists $\ell_{n} > 0$ such that
\[[k \ell_{n} - n, k \ell_{n} + n] \cap C_T(A) = \varnothing \text{ for all } k = 1, 2, \dots, n.\]
Now if we have defined $h_1, \dots, h_{n-1}$ in such a way that $C_T(A) \cap C_S^n(I) = \{0\}$, then we can choose $h_n$ (amongst the $\ell_n$) such that
\[C_T(A) \cap C_S^{n+1}(I) = C_T(A) \cap [C_S^n(I) \oplus \{(-r_n + 1) h_n, \dots, (r_n -1) h_n\}] = \varnothing\]
using the existence of the gaps.

We give a sketch of the appropriate version of the Adequate Gaps Lemma, using the same setup and notation. We have \[B_n := \bigcup_{j \in \{0, 1, \dots, n+1\}} T^j(A_n).\] By the corollary of the ergodic theorem,
\[\lim_{m \to \infty} \mu(T^m B_n \cap A_n) = 0 \text{ in density 1}.\]
But by Lemma \ref{density lemma}, and the fact that $k \Z$ has positive density, we even have that for all $k \in \N$,
\[\lim_{m \to \infty} \mu(T^{km}B_n \cap A_n) = 0 \text{ in density 1}.\]
Using that the intersection of finitely many sets of density 1 has density 1, we can therefore find $\ell_{n+1} > \ell_n$ such that $\mu(T^{k \ell_n} B_n \cap A_n)$ is arbitrarily small for all $k = 1, \dots, r$. The remainder of the proof is the same as for the Adequate Gaps Lemma

\bigskip

Similarly, $S$ can be chosen to be a rank-one transformation with infinite ergodic index. We refer the reader to \cite{AdFrSi01} for a more detailed discussion of the construction. Let $C_0 = \{[0,1)\}$. If $C_n$ is defined, we obtain $C_{n+1}$ by cutting $C_n$ into 4 subcolumns and adding a certain number of spacers to each subcolumn. We require the following for each $n \in \N$:
\begin{itemize}
\item $h_{n,0} = h_{n,2}+1$ 
\item $h_{n,1} > n \cdot (h_{n,0} + h_{n,2} + 2 h_n - 2 s_{n-1,3})$
\item $s_{n,3}$ must be sufficiently large, i.e. $h_{n,3} > n \cdot (h_{n,0} + h_{n,1} + h_{n,2} + h_n)$ and
\[\lim_{n \to \infty} \frac{h_n}{h_n - s_{n-1,3}} = \infty.\]
\end{itemize}
To simplify our analysis, we will replace the second condition with $h_{n,1} = 5 n h_{n,0}$. Let $I$ denote the base level of $C_1$ for $S$. Again, the idea is to find the right notion of gaps for $C_T(A)$ and prove a suitable version of the Adequate Gaps Lemma. Note that for all $n \geq 1$, we have
\[C_S^{n+1}(I) = C_S^{n}(I) \oplus \left\{ c \sum_{k=i}^j h_{n,k}: 0 \leq i \leq j \leq 2, c \in \{-1, 0, 1\} \right\}.\]
It is not difficult to see that we require the following gaps: for every $m>0$ there exists $\ell_m>0$ such that
\[[\ell_m - m, \ell_m + m] \cap C_T(A) = \varnothing;\]
\[[5m \ell_m - m, 5m \ell_m + m] \cap C_T(A) = \varnothing;\]
\[[6 m \ell_m - m, 6 m \ell_m + m] \cap C_T(A) = \varnothing;\]
\[[(6m +1) \ell_m - m, (6m + 1) \ell_m + m] \cap C_T(A) = \varnothing.\]
Then we can use the existence of these gaps to inductively choose $h_{n,0}, h_{n,1}, h_{n,2}$ to be some $\ell_m$, $5 m \ell_m$ and $\ell_m + 1$, respectively, so that $C_T(A) \cap C_S^n(I) = \{0\}$ (and we also need $h_{n,3}$ to be sufficiently large). We leave the details to the reader.
\end{proof}

\begin{theorem}
Let $\{T_i\}_{i \in \N}$ be a countable collection of nonsingular transformations with no finite invariant measure. Then there exists a rank-one transformation $S$ such that $T_i \times S$ is not conservative for every $i \in \N$.
\end{theorem}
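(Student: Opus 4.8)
The plan is to run a single diagonal induction that builds the skyscraper transformation $S$ (by specifying its heights $h_n$) together with, for each $i$, a finite-measure set $A_i$ for $T_i$, arranged so that $C_{T_i}(A_i) \cap C_S(I) = \{0\}$ for every $i$, where $I$ is the base level of the first column $C_1$ of $S$. Once this is achieved, we have $\mu((T_i \times S)^n(A_i \times I) \cap (A_i \times I)) = \mu(T_i^n A_i \cap A_i)\,\mu(S^n I \cap I)$, which is positive exactly when $n \in C_{T_i}(A_i) \cap C_S(I) = \{0\}$; hence $A_i \times I$ is a wandering set and $T_i \times S$ is not conservative, simultaneously for all $i$.

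The single-transformation argument [proof of (1)] handles one $T$ by producing a set whose conservative sequence has long gaps and then placing each return-time cluster $h_n + C_S^n(I) \subset [h_n - M_n, h_n + M_n]$ (with $M_n := \max C_S^n(I)$) inside such a gap. The obstruction to doing this for all $T_i$ at once is that a finite union of non-syndetic sets need not be non-syndetic, so the sets $C_{T_i}(A_i)$ need have no \emph{common} gaps unless we force them to share gap locations. Accordingly, at stage $n$ I would first introduce $T_n$ by choosing, via a Rokhlin tower for $T_n$ (which we may assume aperiodic, since a periodic component of positive measure would carry a finite invariant measure), a finite-measure set $A_n^{(0)}$ whose conservative sequence misses the \emph{finite} set $C_S^n(I) \setminus \{0\}$ of clusters placed so far; and then select a single large location $\ell$ at which a gap of radius $M_n$ can be carved \emph{simultaneously} in $C_{T_1}(A_1), \dots, C_{T_n}(A_n)$, setting $h_n := \ell$.

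The engine for the simultaneous carving is the density-$1$ fact underlying the Adequate Gaps Lemma: for each $i \le n$, the corollary of the ergodic theorem (Proposition \ref{BET}) gives $\mu(T_i^m B_i \cap A_i) \to 0$ in density $1$ for a suitable bounded union $B_i$ of iterates of $A_i$, so the locations admitting a radius-$M_n$ gap form a set of density $1$; the intersection over the finitely many $i \le n$ is again density $1$ and so contains arbitrarily large $\ell$. Carving removes a controlled amount $< \varepsilon_i 2^{-n}$ from each $A_i$, and since shrinking a set only shrinks its conservative sequence, gaps carved at earlier stages persist; thus each $A_i := \bigcap_n A_i^{(n)}$ has positive measure. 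The bookkeeping then closes: a cluster $h_m$ with $m \ge i$ is dodged by the gap carved for $T_i$ at stage $m$, while a cluster $h_m$ with $m < i$ lies inside $C_S^i(I) \setminus \{0\}$ and is dodged by the finite-set avoidance built into $A_i^{(0)}$ — here one uses lacunarity ($h_{m+1} \gg h_m + M_m$) to see that the cluster near $h_m$ is never re-populated at later stages, so $C_{T_i}(A_i) \cap C_S(I) = \{0\}$.

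The main obstacle is exactly this coordination: organizing an honest single induction in which each $A_i$'s own (infinite) Adequate-Gaps refinement is interleaved with the choice of the heights $h_n$, so that at every stage only finitely many density-$1$ conditions must be met at once. This scheduling is what converts the per-transformation long gaps into the common gaps that the skyscraper $S$ can exploit; the supporting pieces (Rokhlin initialization, summable measure control, and the cluster-separation estimate) are routine once the order of operations is fixed.
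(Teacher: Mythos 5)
Your proposal is correct in substance, but it takes a genuinely different route from the paper at one structural point, and it is worth seeing why the paper's version is cheaper. The paper runs a two-phase argument: first, by one induction (carving at stage $n$ a common gap of length $n$ via exactly the intersection of finitely many density-$1$ conditions you describe), it produces sets $A_i$ such that for every $n$ the sequences $C_{T_1}(A_1), \dots, C_{T_n}(A_n)$ share arbitrarily long gaps; then, in a second induction, it chooses the heights $h_n$ of the skyscraper $S$ inside those common gaps. The key difference is that the paper tests non-conservativity of $T_i \times S$ on the rectangle $A_i \times J_i$, where $J_i$ is the base level of column $C_i$ rather than a single level of $C_1$: since $C_S(J_i)$ is generated only by the heights $h_m$ with $m \geq i$, all of which are chosen \emph{after} $T_i$ enters the induction, the ``old clusters'' obstruction you identify (the finite set $C_S^i(I) \setminus \{0\}$ of return times placed before $T_i$ appears) simply never arises, and your Rokhlin-tower initialization of $A_i^{(0)}$ becomes unnecessary. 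Your single-level version is legitimate --- each $T_i$, having no finite invariant measure, is indeed aperiodic, so the nonsingular Rokhlin lemma applies, and shrinking sets only shrinks conservative sequences, so both the carved gaps and the initial finite-set avoidance persist to $A_i = \bigcap_n A_i^{(n)}$ --- but it buys only the cosmetic uniformity of a common second factor $I$, at the cost of the extra scheduling and cluster-separation bookkeeping you describe. One ingredient does need repair: Proposition \ref{BET} is stated for infinite measure-preserving \emph{ergodic} transformations, whereas the $T_i$ here are merely nonsingular with no finite invariant measure; as in the paper's proof of (1), the density-$1$ engine must instead come from the characterization in \cite{EHIP14}, namely a set $W_i$ with $\frac{1}{n} \sum_{k=0}^{n-1} \mu(T_i^k W_i) \to 0$, which gives $\mu(T_i^m B \cap A) \to 0$ in density $1$ for any $A \subset W_i$ and any bounded union $B$ of iterates of $A$. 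In your scheme this forces the Rokhlin base $A_i^{(0)}$ to be taken \emph{inside} such a set $W_i$ (e.g.\ by intersecting a suitable tower level with $W_i$), so that the later carvings remain available --- a routine but necessary compatibility check that your sketch elides.
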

\begin{proof}
Using a modification of the Adequate Gaps Lemma, we can prove that there exist sets $A_i$ of positive measure (for each $T_i$) such that for any $n>0$, the conservative sequences $C_{T_1}(A_1), \dots, C_{T_n}(A_n)$ share arbitrarily long gaps (i.e. for any $k>0$, there exists $\ell_k > 0$ such that $C_{T_i}(A_i) \cap [\ell_k - k, \ell_k + k] = \varnothing$ for all $i = 1, \dots, n$). To show this, we can proceed by induction and guarantee that at the $n$-th stage the sets $C_{T_1}(A_1), \dots, C_{T_n}(A_n)$ all share a gap of length $n$.

We now give the inductive construction of the skyscraper transformation $S$ by specifying the heights $h_i$. Let $J_i$ denote the base level of $C_i$. Suppose $h_1, \dots, h_{n-1}$ have been defined so that for each $i = 1, \dots, n-1$, we have
\[C_{T_i}(A_i) \cap C_S^n(J_i) = \varnothing.\]
Using that for any level $J$ in a column $C_i$ ($i=1, \dots, n$), we have
\[C_S^{n+1}(J) = C_S^n(J) \oplus \{-h_n, 0, h_n\},\]
and the existence of the gaps described above, we can choose $h_n$ from $\ell_k$ so that for each $i = 1, \dots, n$, we have
\[C_{T_i}(A_i) \cap C_S^{n+1}(J_i) = \varnothing.\]
It then follows that for the $S$ we construct, the conservativity condition fails for $T_i \times S$ for the rectangle $A_i \times J_i$ (for each $i$).
\end{proof}

\begin{remark}
The transformation $S$ can be chosen to be a rank-one transformation that is rigid or has infinite ergodic index.
\end{remark}

\begin{remark}
Every time one of the products $T \times S$ is not conservative, we know it must be totally dissipative.
\end{remark}

\subsection{Flows}

Observe that measure-preserving flows $\T = \{T^r\}_{r \in \R}$ are always conservative under our definition. Indeed, by the continuity of flows, given a set $A$ of positive measure, there exists $\delta > 0$ such that if $0 < r < \delta$, then $\mu(T^r A \cap A) > 0$. Thus to extend our result to flows, we need to introduce the following notion. We say that a measure-preserving flow (or transformation) is \emph{strongly conservative} if for all sets $A$ of positive measure, the conservative set $C_\T(A)$ is unbounded. We note that in the case of transformations, conservativity is equivalent to strong conservativity. On the other hand, there exist flows that are conservative but not strongly conservative (e.g. consider a shift on the real line $T^s(x) = x + s$). It is also clear that for measure-preserving flows, ergodicity implies strong conservativity.

We prove the following theorem, which will be an immediate consequence of Lemmas \ref{Flow lemma} and \ref{Isaac's lemma for flows}.

\begin{theorem} \label{flows}
Let $\T = \{T^r\}_{r \in \R}$ be an infinite measure-preserving ergodic flow. Then there exists a rank-one skyscraper flow $\ES = \{S^r\}_{r \in \R}$ such that the product $\T \times \ES$ is not strongly conservative (and therefore not ergodic).
\end{theorem}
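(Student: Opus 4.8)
The plan is to mirror the strategy used for $\Z$-actions: reduce Theorem~\ref{flows} to two independent statements, a \emph{construction} step and an \emph{existence} step, and then combine them through the identity $C_{\T \times \ES}(A \times I) = C_\T(A) \cap C_\ES(I)$. Concretely, since for a product flow $\mu\big(T^r A \cap A\big)\,\nu\big(S^r I \cap I\big) > 0$ exactly when $r \in C_\T(A)$ and $r \in C_\ES(I)$, it suffices to produce a finite-measure set $A$ for $\T$ and a skyscraper flow $\ES$ with a level $I$ such that $C_\T(A) \cap C_\ES(I)$ is \emph{bounded}: then $A \times I$ witnesses that $\T \times \ES$ is not strongly conservative, and since ergodic measure-preserving flows are strongly conservative, the product cannot be ergodic. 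Note that, in contrast to the discrete case, continuity of the flow forces $C_\T(A)$ to contain an interval about $0$, so the best one can hope for is boundedness of the intersection rather than triviality.

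For the construction step I would use the notion of \emph{long gaps} adapted to $\R$: say $C \subseteq \R$ has long gaps if for all $L, M > 0$ there is an interval $[p, p+L]$ with $p > M$ and $[p,p+L] \cap C = \varnothing$. Given such an $A$, I build $\ES$ as a skyscraper flow (cut into $2$, spacers on the right only), specifying the real heights $h_m$ inductively. As in the skyscraper formula, $C_\ES^{m+1}(I) = C_\ES^m(I) \oplus \{-h_m, 0, h_m\}$, where $C_\ES^m(I) \subseteq [-R_m, R_m]$ with $R_m = \sum_{i<m} h_i$. Assuming inductively that $C_\T(A) \cap C_\ES^m(I)$ lies in a fixed bounded set, I choose $h_m$ to be the center of a gap of $C_\T(A)$ of length exceeding $2R_m$ situated far out on the line; then the two new clusters $C_\ES^m(I) \pm h_m$ fall inside that gap and contribute nothing new to the intersection, while $h_m \to \infty$ fast also secures the growth needed for the descendant-set formula. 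Since $C_\ES(I) = \bigcup_m C_\ES^m(I)$, the intersection $C_\T(A) \cap C_\ES(I)$ stays bounded.

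The existence step---producing $A$ with $C_\T(A)$ having long gaps---is where the real work lies, and I expect it to be the main obstacle. The discrete proof (Lemma~\ref{adequate gaps}) thickens $A_n$ to $B_n := \bigcup_{|k| \le n+1} T^k(A_n)$ and then applies the ergodic-theorem corollary (Proposition~\ref{BET}) to $B_n$, which requires $B_n$ to have finite measure. For flows the analogous thickening is a flow-out $\bigcup_{|s| \le c} T^s(A_n)$, and for a general finite-measure set this flow-out can have \emph{infinite} measure (e.g.\ for the translation flow on $\R^2$ applied to a union of thin slabs of summable total area), so Proposition~\ref{BET} cannot be invoked directly. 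The fix I would use is to start the induction from a set of bounded local occupation time: fix any $P$ with $0 < \mu(P) < \infty$, let $g_c(x) := \lambda\big(\{s \in [-c,c] : T^s x \in P\}\big)$ be the Lebesgue measure of the time spent in $P$, and set $A^* := \{x : g_1(x) \ge \rho\}$ for $\rho > 0$ small. Since $\int_X g_c\, d\mu = 2c\,\mu(P)$ by Fubini and invariance, Chebyshev gives $\mu(A^*) < \infty$, while $\mu(A^*) \to \mu(\{g_1 > 0\}) > 0$ as $\rho \to 0$, so $A^*$ has positive finite measure. Crucially, the flow-out of $A^*$ over $[-c,c]$ is contained in $\{g_{1+c} \ge \rho\}$, which again has finite measure; hence every thickening $B_n$ that arises in the induction is of finite measure.

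With this starting set in hand, the induction proceeds as in the Adequate Gaps Lemma, using the continuous analogue of Proposition~\ref{BET}, namely $\frac{1}{R} \int_0^R \mu(T^r A \cap B)\, dr \to 0$ for finite-measure $A, B$ (the infinite-measure ratio ergodic theorem for flows), which yields arbitrarily large $\ell_{n+1}$ with $\mu\big(T^{\ell_{n+1}} B_n \cap A_n\big)$ arbitrarily small. Removing $T^{\ell_{n+1}} B_n$ from $A_n$ to define $A_{n+1}$ guarantees $T^r(A_{n+1}) \cap A_{n+1} = \varnothing$ for all $r \in [\ell_{n+1} - (n+1), \ell_{n+1} + (n+1)]$, and keeping the removed measure summable (below $\ve/2^{n+1}$) ensures $A := \bigcap_n A_n$ has positive measure. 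Then $C_\T(A)$ avoids the intervals $[\ell_{n+1} - (n+1), \ell_{n+1} + (n+1)]$, whose lengths and locations both tend to infinity, so $C_\T(A)$ has long gaps, completing both steps and hence the theorem.
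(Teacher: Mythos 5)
Your proposal is correct, and it shares the paper's overall skeleton: the same two-step reduction into a construction lemma (the paper's Lemma~\ref{Flow lemma}) and an existence lemma (Lemma~\ref{Isaac's lemma for flows}), glued together via $C_{\T\times\ES}(A\times I)=C_\T(A)\cap C_\ES(I)$ and the fact that ergodic measure-preserving flows are strongly conservative. Where you genuinely diverge is at the step you correctly flagged as the crux: making the thickenings $B_n$ have finite measure. The paper gets a good starting set by invoking a version of the Ambrose--Kakutani representation theorem (Proposition~\ref{A-K theorem}): every ergodic measure-preserving flow is isomorphic to a flow built under a function, and the base of that representation yields $A^*$ with $\mu\bigl(\bigcup_{r\in[0,\alpha)}T^r(A^*)\bigr)<\infty$. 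You instead construct $A^*$ by hand as the superlevel set $\{g_1\ge\rho\}$ of the occupation-time function $g_c(x)=\lambda(\{s\in[-c,c]:T^sx\in P\})$, using Fubini ($\int g_c\,d\mu=2c\,\mu(P)$) and Chebyshev for finiteness, plus the containment of the $[-c,c]$-flow-out of $\{g_1\ge\rho\}$ in $\{g_{1+c}\ge\rho\}$ (valid since $[-1,1]\subset[s-(1+c),s+(1+c)]$ for $|s|\le c$) to propagate finiteness through the induction. This is a more elementary and self-contained route: it avoids the representation theorem entirely and uses nothing beyond joint measurability of the action and Fubini, whereas the paper's route imports a structural black box but gets the key set in one line. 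Both then run the same induction powered by Proposition~\ref{BET for flows}.

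Two details to tighten. First, the paper explicitly warns that flow-outs need not even be measurable, and your phrase ``every thickening $B_n$ is of finite measure'' quietly presupposes measurability of $B_n$; your containment fixes this, but you should run the induction with the measurable superset, i.e.\ apply Proposition~\ref{BET for flows} to the pair $\{g_{n+2}\ge\rho\}$, $A_n$ and define $A_{n+1}:=A_n\setminus T^{\ell_{n+1}}(\{g_{n+2}\ge\rho\})$, never touching $B_n$ itself. Second, for a block $I$ of a skyscraper flow the return-time set is the discrete sumset thickened by an interval (the paper records $C_\ES^m(B)=[0,1)\oplus\{c_1h_1+\dots+c_{m-1}h_{m-1}:c_i\in\{-1,0,1\}\}$), so your recursion $C_\ES^{m+1}(I)=C_\ES^m(I)\oplus\{-h_m,0,h_m\}$ drops that interval; the fix is cosmetic --- take $R_m=1+\sum_{i<m}h_i$ and place $\pm h_m$ in a gap of length exceeding $2R_m$, which your long-gaps statement (gap lengths and locations both unbounded) already supplies. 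Neither point is a genuine gap; both are one-line repairs consistent with your own setup.
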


Suppose $B$ denotes the lowest block of column $C_1$ of a skyscraper flow $\ES$ (so $B = [0,\frac{1}{2}) \times [0,1)$). As for skyscraper transformations, we can derive a useful expression for $C_\ES^m(B)$, the return times of $B$ within column $C_m$:
\[C_\ES^m(B) = [0,1) \oplus \{c_1 h_1 + \dots + c_{m-1} h_{m-1}: c_i \in \{-1,0,1\} \text{ for all } i\}.\]
We say that a set $E \subset \R$ has \emph{long gaps} if for every $n > 0$, there exists $\ell_n > 0$ such that $E \cap [\ell_n - n, \ell_n + n] = \varnothing$. Then the following lemma is clear based on our previous work for transformations:

\begin{lemma} \label{Flow lemma}
Let $\T = \{T^r\}_{r \in \R}$ be a flow that admits a set $A$ of positive measure whose conservative set $C_\T(A)$ has long gaps. Then there exists a skyscraper flow $\ES$ such that $\T \times \ES$ is not strongly conservative.
\end{lemma}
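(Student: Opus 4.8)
The plan is to mirror the argument used for skyscraper transformations (the proof of part (1) of the transformation theorem) almost verbatim, since a skyscraper flow is built from the same cutting-into-two-with-spacers-on-the-right scheme, and the return-time formula $C_\ES^m(B) = [0,1) \oplus \{c_1 h_1 + \dots + c_{m-1} h_{m-1}: c_i \in \{-1,0,1\}\}$ is the continuous analogue of the skyscraper conservative sequence. As in the earlier proofs, it suffices to construct $\ES$ so that for the lowest block $B$ of column $C_1$, we have $C_\T(A) \cap C_\ES(B)$ bounded; this forces $C_{\T \times \ES}(A \times B)$ to be bounded, which means $\T \times \ES$ is not strongly conservative.

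First I would set up the induction on the heights $h_m$ defining $\ES$. Since $C_0 = [0,1)\times[0,1)$ and $B = [0,\frac12)\times[0,1)$, the base case has $C_\ES^1(B) = [0,1)$. Assuming $h_1, \dots, h_{m-1}$ have been chosen so that $C_\T(A) \cap C_\ES^m(B) \subseteq [-N_m, N_m]$ for some bound (in fact I expect one can arrange $C_\T(A) \cap C_\ES^m(B) \subseteq (-1,1)$, so that it contributes nothing to strong conservativity), I would use the recursion
\[
C_\ES^{m+1}(B) = C_\ES^m(B) \oplus \{-h_m, 0, h_m\}
\]
together with the hypothesis that $C_\T(A)$ has long gaps. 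The long-gaps hypothesis guarantees, for any $n$, an $\ell_n$ with $C_\T(A) \cap [\ell_n - n, \ell_n + n] = \varnothing$. Choosing $h_m = \ell_n$ for suitably large $n$ (large enough that the interval $[\ell_n - n, \ell_n + n]$ dwarfs the diameter of the previously constructed $C_\ES^m(B)$), the translated copies $C_\ES^m(B) \pm h_m$ land entirely inside the gap around $\pm \ell_n$ and hence miss $C_\T(A)$, while the untranslated copy $C_\ES^m(B)$ continues to meet $C_\T(A)$ only near the origin. This keeps $C_\T(A) \cap C_\ES^{m+1}(B)$ bounded by a fixed constant, and since $C_\ES(B) = \bigcup_m C_\ES^m(B)$, the full intersection $C_\T(A) \cap C_\ES(B)$ remains bounded, giving the conclusion.

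The one genuinely new wrinkle compared to the discrete case is the presence of the interval factor $[0,1)$ in the expression for $C_\ES^m(B)$, which reflects that a block has positive width in the flow direction. Because of this thickening by $[0,1)$, when I exploit a gap of $C_\T(A)$ I must ensure the gap is wider than this unit interval so that the thickened translate $([0,1) \oplus C_\ES^m(B)) \pm h_m$ still avoids $C_\T(A)$; this is why the definition of long gaps for subsets of $\R$ is stated with symmetric windows $[\ell_n - n, \ell_n + n]$ of growing radius $n$, and choosing $n \geq 1$ already absorbs the width-$1$ thickening. I expect this bookkeeping — tracking the $[0,1)$ thickening through the sumset recursion — to be the main (though minor) obstacle; everything else is a direct transcription of the skyscraper-transformation argument, so I would present the proof briefly and refer back to the earlier construction for the repeated details.
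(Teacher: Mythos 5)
Your proposal is correct and follows essentially the same route as the paper: the paper's proof simply constructs the skyscraper flow so that $C_\T(A) \cap C_\ES(B) \subset (-1,1)$ by the same inductive height-selection used in the discrete case (Lemma~\ref{Construction of S}), exploiting the recursion $C_\ES^{m+1}(B) = C_\ES^m(B) \oplus \{-h_m, 0, h_m\}$ and the long gaps of $C_\T(A)$, with the $[0,1)$ thickening absorbed by taking the gap radius large enough — exactly the bookkeeping you identified. Your write-up is in fact more detailed than the paper's, which leaves these steps to the reader.
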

\begin{proof}
We construct a skyscraper flow $\ES$ so that if $B$ is the lowest block of $C_1$, then $C_\T(A) \cap C_\ES(B) \subset (-1,1)$. The procedure is so similar to that in Lemma \ref{Construction of S} that we leave it to the reader.
\end{proof}

Finally, we show that every infinite measure-preserving ergodic flow admits a set $A$ of positive measure with long gaps in $C_\T(A)$. To establish the Adequate Gaps Lemma for transformations, the key step was deriving (\ref{crucial step}):
\[\lim_{m \to \infty} \mu(T^m B_n \cap A_n) = 0.\]
To do so, we used:
\begin{enumerate}
\item the   ergodic theorem in the form of Proposition \ref{BET}.
\item the fact that finite unions of sets of the form $T^k(A)$ have finite measure provided that $A$ has finite measure.
\end{enumerate}

Now for flows we have the following infinite ergodic theorem:

\begin{proposition} [\cite{Aa97}] \label{BET for flows}
Let $\T = \{T^r\}_{r \in \R}$ be an infinite measure-preserving ergodic flow. Then for all sets $A$ and $B$ of finite measure,
\[\lim_{T \to \infty} \frac{1}{T} \int_0^T \mu(T^r A \cap B) \; dr = 0.\]
In particular,
\[\liminf_{r \to \infty} \mu(T^r A \cap B) = 0.\]
\end{proposition}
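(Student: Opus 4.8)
The plan is to run the argument underlying the discrete statement of Proposition~\ref{BET}, but in continuous time, by passing to the Koopman representation of the flow on $L^2(\mu)$ and invoking the mean ergodic theorem for one-parameter unitary groups. Since each $T^r$ is invertible and measure-preserving, the operators $U^r f := f \circ T^{-r}$ form a one-parameter group of unitary operators on $L^2(\mu)$, and for sets $A, B$ of finite measure we have $\mathbf{1}_A, \mathbf{1}_B \in L^2(\mu)$ together with the identity
\[\mu(T^r A \cap B) = \int_X \mathbf{1}_A(T^{-r}x)\,\mathbf{1}_B(x)\,d\mu(x) = \langle U^r \mathbf{1}_A, \mathbf{1}_B\rangle.\]
Thus it suffices to show that the Ces\`aro averages $\frac{1}{T}\int_0^T \langle U^r \mathbf{1}_A, \mathbf{1}_B\rangle\,dr$ tend to $0$ as $T \to \infty$.

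First I would record that the Koopman group $\{U^r\}$ is strongly continuous on $L^2(\mu)$; this is the standard consequence of the measurability of the flow (a weakly measurable one-parameter unitary group on a separable Hilbert space is strongly continuous), and it is exactly the place where the hypotheses on $\T$ as a measurable action enter. Strong continuity guarantees that $r \mapsto U^r \mathbf{1}_A$ is continuous, hence that the Bochner integral $\frac{1}{T}\int_0^T U^r \mathbf{1}_A\,dr$ is a well-defined element of $L^2(\mu)$ and that the bounded functional $\langle \cdot\,, \mathbf{1}_B\rangle$ passes through it:
\[\frac{1}{T}\int_0^T \langle U^r \mathbf{1}_A, \mathbf{1}_B\rangle\,dr = \left\langle \frac{1}{T}\int_0^T U^r \mathbf{1}_A\,dr,\ \mathbf{1}_B \right\rangle.\]
By the continuous-time von Neumann mean ergodic theorem, $\frac{1}{T}\int_0^T U^r \mathbf{1}_A\,dr \to P\mathbf{1}_A$ in $L^2(\mu)$, where $P$ is the orthogonal projection onto the closed subspace of vectors fixed by every $U^r$, i.e.\ onto the flow-invariant functions in $L^2(\mu)$. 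Because $\T$ is ergodic, every flow-invariant function is constant a.e., and since $\mu$ is infinite the only constant lying in $L^2(\mu)$ is $0$; hence $P\mathbf{1}_A = 0$. Combining this with the displayed identity and the Cauchy--Schwarz inequality gives $\frac{1}{T}\int_0^T \mu(T^r A \cap B)\,dr \to 0$, which is the first assertion. The ``in particular'' clause is then immediate: the integrand $\mu(T^r A \cap B)$ is nonnegative, so if its $\liminf$ as $r \to \infty$ were some $c > 0$ then the averages would be bounded below by a positive constant for all large $T$, contradicting the limit just proved; hence $\liminf_{r\to\infty}\mu(T^r A \cap B) = 0$.

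The only genuinely delicate step is the strong continuity of $\{U^r\}$ and the attendant justification of the vector-valued integral; everything else (the mean ergodic theorem, the identification of the fixed subspace with the constants, and the vanishing of constants in infinite measure) is routine. An alternative that sidesteps the Bochner integral is to invoke the spectral theorem for $\{U^r\}$, writing $\langle U^r \mathbf{1}_A, \mathbf{1}_B\rangle = \int_\R e^{irt}\,d\nu(t)$ for the complex spectral measure $\nu$ associated to the pair $(\mathbf{1}_A, \mathbf{1}_B)$; the Ces\`aro average of this Fourier--Stieltjes transform converges to the atom $\nu(\{0\}) = \langle E(\{0\})\mathbf{1}_A, \mathbf{1}_B\rangle = \langle P\mathbf{1}_A, \mathbf{1}_B\rangle = 0$ by the same ergodicity argument. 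Either route reduces the statement to the single fact that an ergodic infinite measure-preserving flow has no nonzero invariant $L^2$ function.
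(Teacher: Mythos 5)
Your proof is correct. Note first that the paper does not actually prove this proposition---it is quoted from Aaronson's book \cite{Aa97}---so there is no internal argument to compare against; the closest analogue in the paper, Proposition \ref{BET}, is likewise stated without proof as a corollary of the ergodic theorem for infinite measure-preserving transformations. Your route is the natural Hilbert-space one rather than the $L^1$/ratio-ergodic-theorem tradition of \cite{Aa97}: you work entirely in $L^2(\mu)$ via $\mu(T^r A \cap B) = \langle U^r \mathbf{1}_A, \mathbf{1}_B\rangle$, strong continuity of the Koopman group (von Neumann's theorem on weakly measurable unitary groups, available since $X$ is a Lebesgue space so $L^2(\mu)$ is separable), the continuous-time mean ergodic theorem, and the key observation that in infinite measure the fixed subspace is $\{0\}$ because invariant $L^2$ functions are constant and nonzero constants are not square-integrable. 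All steps check out, including the Bochner-integral justification for exchanging the time average with the inner product and the elementary deduction of the $\liminf$ statement from nonnegativity of the integrand. The one point worth making explicit is the passage from ergodicity as defined in the paper (triviality of sets invariant under every $T^g$) to the claim that any $f \in L^2$ fixed by every $U^r$ (a.e., with the exceptional null set a priori depending on $r$) is a.e.\ constant; for measurable flows on Lebesgue spaces the mod-$0$ and strict notions of invariance coincide, but this is precisely the measurable-action subtlety you gesture at and deserves either a citation or a short Fubini argument producing a strictly invariant version of $f$. Compared with the route in \cite{Aa97}, your argument (and the spectral variant via Stone's theorem that you sketch, where the Ces\`aro average of the Fourier--Stieltjes transform picks out the atom $\nu(\{0\}) = \langle P\mathbf{1}_A, \mathbf{1}_B\rangle = 0$) is more self-contained and avoids pointwise ergodic theory, at the mild cost of applying only to $L^2$ data---which suffices here, since indicators of finite-measure sets are square-integrable.
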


It is slightly more difficult to obtain an analogue of (2). We would like to guarantee that a union of the form $\bigcup_{r \in [0,\alpha)} T^r (A)$ has finite measure if $A$ has finite measure. In general, though, such a union need not even be measurable, and even if it is measurable, it is not clear that it must be of finite measure.

We will need the following:

\begin{proposition} \label{A-K theorem}
Let $\T = \{T^r\}_{r \in \R}$ be an ergodic measure-preserving flow. Then it is isomorphic to a flow built under a function. In particular, there exists $A^* \subset X$, $0 < \mu(A^*) < \infty$, such that
\[\mu \left(\bigcup_{r \in [0, \alpha)} T^r(A^*) \right) < \infty\]
for some $\alpha > 0$ (and therefore for \emph{any} $\alpha > 0$ using the measure-preserving property).
\end{proposition}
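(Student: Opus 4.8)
The substance of this proposition is the Ambrose--Kakutani representation theorem; once it is invoked, the remaining assertion is a short computation in the suspension model. The plan is as follows.

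First I would invoke the Ambrose--Kakutani theorem (see \cite{Aa97}). Since the underlying space is nonatomic (and, in our setting, of infinite measure), the ergodic flow $\T$ is aperiodic, so the theorem applies and yields an isomorphism of $\T$ with a flow built under a function: a base space $(Y,\nu)$ with a $\nu$-preserving base transformation $\tau$ and a measurable roof function $f \colon Y \to (0,\infty)$, so that up to isomorphism $X = \{(y,s) : y \in Y,\ 0 \le s < f(y)\}$, the measure $\mu$ is the restriction of $\nu \times \text{(Lebesgue)}$, and $T^r$ acts by flowing upward, returning to the base via $\tau$ each time the roof is reached. Transporting the set we build back through this isomorphism, it suffices to produce $A^*$ in the suspension model. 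I would also record that $\nu$ is $\sigma$-finite, being the base measure of a $\sigma$-finite suspension, and that $f > 0$ $\nu$-a.e.

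Next I would locate a good piece of the base. Since $f > 0$ a.e. and $\nu$ is $\sigma$-finite, there is $\delta > 0$ with $\nu(\{f \ge \delta\}) > 0$, and I may choose $Y_0 \subseteq \{f \ge \delta\}$ with $0 < \nu(Y_0) < \infty$. Set $\alpha := \delta/2$ and define $A^* := Y_0 \times [0,\alpha)$, so that $0 < \mu(A^*) = \alpha\,\nu(Y_0) < \infty$. The key point is that $A^*$ sits low enough relative to the roof over $Y_0$ that flowing it up for time less than $\alpha$ never reaches the roof: for $(y,s) \in A^*$ and $0 \le r < \alpha$ we have $s + r < 2\alpha = \delta \le f(y)$, so $T^r(y,s) = (y, s+r)$ with no return to the base. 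Consequently $\bigcup_{r \in [0,\alpha)} T^r(A^*) = Y_0 \times [0,2\alpha)$, whose measure is $2\alpha\,\nu(Y_0) < \infty$.

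Finally, for the ``for any $\alpha$'' clause I would use the measure-preserving property: given the $A^*$ just constructed and any $\beta > 0$, choosing $N = \lceil \beta/\alpha \rceil$ gives
\[
\bigcup_{r \in [0,\beta)} T^r(A^*) \subseteq \bigcup_{k=0}^{N-1} T^{k\alpha}\Big(\bigcup_{r \in [0,\alpha)} T^r(A^*)\Big),
\]
a finite union of $\mu$-preserving images of a set of finite measure, hence of finite measure. The only real obstacle is the appeal to Ambrose--Kakutani (together with the verification that $\T$ is aperiodic so that it applies); everything after that is the elementary observation that a sufficiently thin slab sitting over a finite-measure, bounded-roof part of the base stays away from the roof under short-time flow.
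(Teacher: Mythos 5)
Your proposal is correct and matches the paper's approach: the paper simply cites the Ambrose--Kakutani representation theorem (Theorem A of \cite{Sa84}) and calls the finite-measure statement ``an easy corollary,'' and your thin-slab construction $A^* = Y_0 \times [0,\alpha)$ over a bounded-roof, finite-$\nu$-measure piece of the base, followed by the covering argument for arbitrary $\alpha$, is exactly the intended easy corollary spelled out. Your explicit description of the union in the suspension model also quietly resolves the measurability concern the paper raises just before stating the proposition, which is a welcome detail.
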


The first part is a version of the Ambrose-Kakutani representation theorem which appears in \cite{Sa84} as Theorem A, and the second part is an easy corollary. We refer the reader to \cite{Sa84} for the definition of flows \emph{built under a function} and further discussion.

\begin{lemma} \label{Isaac's lemma for flows}
Let $\T = \{T^r\}_{r \in \R}$ be an infinite measure-preserving ergodic flow. Then there exists a set $A$ of positive measure whose conservative set $C_\T(A)$ has long gaps.
\end{lemma}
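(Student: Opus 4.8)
The plan is to mirror the inductive scheme of the Adequate Gaps Lemma, but with the finite union of integer translates replaced by a continuum of flow-translates, and to invoke Proposition \ref{A-K theorem} to keep these thickened sets both measurable and of finite measure. Concretely, I would start from the set $A^*$ furnished by Proposition \ref{A-K theorem}, fix $0 < \ve < \mu(A^*)$, put $A_0 = A^*$ and $\ell_0 = 0$, and construct a decreasing sequence $A_0 \supset A_1 \supset A_2 \supset \cdots$ of positive-measure sets together with an increasing sequence $0 = \ell_0 < \ell_1 < \ell_2 < \cdots$ so that, for every $n \geq 1$,
\[ T^r(A_n) \cap A_n = \varnothing \qquad \text{for all } r \in [\ell_n - n,\, \ell_n + n]. \]
Setting $A = \bigcap_n A_n$ then forces $T^r(A) \cap A \subset T^r(A_n) \cap A_n = \varnothing$ on each such interval, hence $C_\T(A) \cap [\ell_n - n, \ell_n + n] = \varnothing$ for every $n$, which is exactly the long-gaps condition.

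For the inductive step, assume $A_n$ and $\ell_n$ are already built. I would thicken $A_n$ by the flow over a window slightly wider than $n$, setting
\[ B_n := \bigcup_{s \in [-(n+1),\,n+1]} T^s(A_n). \]
Because $A_n \subset A^*$, we have $B_n \subset \bigcup_{s \in [-(n+1),n+1]} T^s(A^*)$, so Proposition \ref{A-K theorem} guarantees that $B_n$ is measurable and of finite measure. Applying the infinite ergodic theorem for flows (Proposition \ref{BET for flows}) to the finite-measure sets $B_n$ and $A_n$ gives $\liminf_{r \to \infty} \mu(T^r(B_n) \cap A_n) = 0$, so arbitrarily large times $r$ make this quantity as small as we wish; I would pick $\ell_{n+1} > \ell_n$ with $\mu\big(T^{\ell_{n+1}}(B_n) \cap A_n\big) < \ve/2^{n+1}$ and define
\[ A_{n+1} := A_n \setminus T^{\ell_{n+1}}(B_n). \]

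The verification is then routine. If $w \in A_{n+1}$ and $T^r w \in A_{n+1}$ with $r = \ell_{n+1} + s$, $|s| \leq n+1$, then $T^s w \in B_n$ (since $w \in A_n$), so $T^r w = T^{\ell_{n+1}}(T^s w) \in T^{\ell_{n+1}}(B_n)$, contradicting $T^r w \in A_{n+1} = A_n \setminus T^{\ell_{n+1}}(B_n)$; this gives the gap on $[\ell_{n+1}-(n+1),\, \ell_{n+1}+(n+1)]$, and reindexing yields a gap of radius $m$ around $\ell_m$ for every $m \geq 1$. The bound $\mu(A_n \setminus A_{n+1}) \leq \mu\big(T^{\ell_{n+1}}(B_n) \cap A_n\big) < \ve/2^{n+1}$ ensures $\mu(A) \geq \mu(A^*) - \ve > 0$. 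The main obstacle, and the only genuinely new ingredient compared with the transformation case, is that the thickening $B_n$ is now a union over a \emph{continuum} of times; such a union is in general neither measurable nor of finite measure, and it is precisely the Ambrose--Kakutani representation of Proposition \ref{A-K theorem} that rescues both properties by situating everything over a base set $A^*$ whose finite-time saturations remain of finite measure.
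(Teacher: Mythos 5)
Your proposal is correct and takes essentially the same route as the paper's proof: start from the set $A^*$ supplied by Proposition \ref{A-K theorem}, thicken $A_n$ by the flow over a bounded time window so that $B_n$ has finite measure, use Proposition \ref{BET for flows} to choose $\ell_{n+1}$ with $\mu(T^{\ell_{n+1}}(B_n) \cap A_n) < \ve/2^{n+1}$, set $A_{n+1} = A_n \setminus T^{\ell_{n+1}}(B_n)$, and intersect. The only differences are cosmetic: you take the symmetric window $[-(n+1), n+1]$ where the paper uses $[0, n+1)$ (equivalent up to reindexing, since conservative sets are symmetric), and you spell out the disjointness verification and measure estimate that the paper leaves to the reader.
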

\begin{proof}
Choose $A^*$ so that $\mu \left(\bigcup_{r \in [0, \alpha)} T^r(A^*) \right) < \infty$ for every $\alpha > 0$. Let $A_0 = A^*$ and $\ell_0 = 0$. Fix $0 < \ve < \mu(A^*)$. We construct
\[A_0 \supset A_1 \supset A_2 \supset \dots\]
\[\ell_0 < \ell_1 < \ell_2 < \dots\]
such that for every $n \in \N$ and for all $r \in [0, n)$, we have
\[ T^{\ell_n + r}(A_n) \cap A_n = \varnothing.\]
Then we will take $A = \bigcap_{n=1}^\infty A_n$.
Suppose $A_n$ and $\ell_n$ are defined. Let $B_n := \bigcup_{r \in [0, n+1)} T^r(A_n)$. By assumption $B_n$ has finite measure. Thus the infinite   ergodic theorem for flows yields the existence of $\ell_{n+1} > \ell_n$ such that
\[\mu(T^{\ell_{n+1}}(B_n) \cap A_n) < \frac{\ve}{2^{n+1}}.\]
Now define $A_{n+1} := A_n \setminus T^{\ell_{n+1}}(B_n)$.
The remainder of the proof is similar enough to the proof for $\Z^d$-actions that we leave it to the reader.
\end{proof}


\section{Conservative non-ergodic products}
\label{S:cnerg}

Say that a transformation $T$ satisfies the {\it strict conservative multiplier property} if there exists
an infinite conservative ergodic measure-preserving transformation $S$ such that $T\times S$ is 
conservative but not ergodic. When that if $T$ is an infinite measure-preserving  K-automorphism (see \cite{Pa65} for the definition) and $S$ is ergodic, then whenever $T \times S$ is conservative, the product must also be ergodic (see \cite[Proposition 4.8b]{SiTh95}). Thus $K$-automorphisms 
do not satisfy the strict conservative multiplier property. In Theorem~\ref{Vadim's theorem} we give a condition on rank-one transformations so that it satisfies the strict conservative multiplier property; Corollary~ref{C:Vcor} gives a simpler to verify condition for this property. It would be interesting to know
what the largest class of transformations is that satisfy this property.

\begin{theorem} \label{Vadim's theorem}
Let $T$ be a rank-one transformation with bounded cuts, and satisfying
\begin{equation} \label{Vadim's condition}
h_n < 2 (h_1 + s_{1,r_1-1} + \dots + s_{n-1, r_{n-1}-1}) + \min_{0 \leq j \leq r_n-2} s_{n,j} - 1
\end{equation}
for all $n \geq 2$. Then there exists a rank-one skyscraper transformation $S$ such that $T \times S$ is conservative but not ergodic.
\end{theorem}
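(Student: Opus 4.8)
The plan is to fix a convenient set $A$ for $T$ and build $S$ as a skyscraper whose base level $I$ of $C_1$ has conservative sequence $C_S(I)$ lying inside $C_T(A)$ but disjoint from $C_T(A)\pm 1$. Recall the skyscraper recursion $C_S^{m+1}(I)=C_S^m(I)\oplus\{-h_m,0,h_m\}$, so $C_S(I)=\{\sum_m c_m h_m : c_m\in\{-1,0,1\}\}$ is the set of signed finite sums of the heights of $S$. I would isolate two reductions. First, if the heights are chosen so that $\sum_{n}\mu(T^nA\cap A)\,\nu(S^nI\cap I)=\infty$, then the rectangle $A\times I$ has infinitely many expected returns, so its conservative part is non-null; since $T$ and $S$ are rank-one (hence conservative ergodic measure-preserving), Aaronson's dichotomy (Proposition 1.2.4 of \cite{Aa97}) forces $T\times S$ to be conservative. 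Second, if $C_S(I)\cap(C_T(A)\pm1)=\varnothing$, then for every $n$ the rectangles $A\times I$ and $A\times S(I)$ satisfy $(T\times S)^n(A\times I)\cap(A\times S(I))=\varnothing$: a nonempty $S$-intersection forces $n-1\in C_S(I)$ while a nonempty $T$-intersection forces $n\in C_T(A)$. Hence the $(T\times S)$-orbit of $A\times I$ is an invariant set of positive measure disjoint from $A\times S(I)$, so $T\times S$ is not ergodic. Everything thus reduces to a combinatorial choice of the heights $h_m$.

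The next step is to re-read hypothesis (\ref{Vadim's condition}). Writing $w_k:=\max H_k$ and $W_n:=\sum_{k=1}^{n-1}w_k$, the identity $h_{k+1}-w_k=h_k+s_{k,r_k-1}$ telescopes the bracket in (\ref{Vadim's condition}) to $h_1+\sum_{k=1}^{n-1}s_{k,r_k-1}=h_n-W_n$, so that (\ref{Vadim's condition}) is equivalent to
\[ g_n \;>\; 2W_n+1, \qquad g_n:=h_n+\min_{0\le j\le r_n-2}s_{n,j}, \]
where $g_n$ is exactly the smallest positive element of $H_n-H_n$ and $W_n=\max\big(C_T^n(A)\big)$ for $A$ a level of $C_1$. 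Geometrically, when one refines $C_T^n(A)\subseteq[-W_n,W_n]$ to $C_T^{n+1}(A)=C_T^n(A)\oplus(H_n-H_n)$, every nonzero translate pushes the central cluster entirely past distance $W_n+1$, so the refinement creates no new integer within distance $1$ of an old one.

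With $A$ the base level of $C_1$, I would then set $h_m:=w_{k_m}$ for a sparse increasing sequence $k_m$, chosen so that $w_{k_{m+1}}\ge 2w_{k_m}$ and the implied spacer counts diverge (making $S$ an infinite-measure skyscraper). The reason for using the extreme values $\pm w_k$ is an isolation lemma: for $v=\sum_m c_m w_{k_m}\in C_S(I)$ and any $v'\ne v$ in $C_T(A)$, let $N$ be the top coordinate in which the expansions $v=\sum\delta_k$ and $v'=\sum\delta_k'$ (with $\delta_k,\delta_k'\in H_k-H_k$) differ. Then $\delta_N\in\{-w_N,0,w_N\}$, and since each of $0$ and $\pm w_N$ is at distance $\ge g_N$ from every other element of $H_N-H_N$, we get $|\delta_N-\delta_N'|\ge g_N$; the lower coordinates contribute at most $2W_N$ in absolute value, so $|v-v'|\ge g_N-2W_N>1$. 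Thus every element of $C_S(I)$ is more than $1$-isolated in $C_T(A)$, whence $C_S(I)\subseteq C_T(A)$ and $C_S(I)\cap(C_T(A)\pm1)=\varnothing$, giving non-ergodicity via the second reduction.

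It remains to verify the conservativity sum, and this is where I expect the main obstacle, as well as where bounded cuts is essential: the well-isolated elements good for non-ergodicity are the \emph{extremal} ones, which are a priori rare and threaten to have vanishing return measure, while conservativity demands frequent returns. The reconciliation is quantitative. For the skyscraper $S$ one computes $\nu(S^{h_m}I\cap I)=\tfrac12\nu(I)$ for every $m$, because at level $m$ all copies in the lower half pair off. For $T$, the distance $w_{k_m}=\max H_{k_m}$ is realized by every pair of copies agreeing off level $k_m$ (one at $0=\min H_{k_m}$, the other at $w_{k_m}$), which yields $\mu(T^{w_{k_m}}A\cap A)\ge\mu(A)/r_{k_m}\ge\mu(A)/R$ once the cuts are bounded by $R$. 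Consequently $\sum_n\mu(T^nA\cap A)\,\nu(S^nI\cap I)\ge\sum_m \tfrac12\nu(I)\cdot\mu(A)/R=\infty$, and the dichotomy completes the conservativity half. The delicate points to execute carefully are the telescoping identity converting (\ref{Vadim's condition}) into $g_n>2W_n+1$, the uniform lower bound $\mu(T^{w_{k_m}}A\cap A)\ge\mu(A)/R$ (which genuinely uses bounded cuts), and checking that the height growth forcing $S$ to be infinite-measure is compatible with choosing the $k_m$ sparse.
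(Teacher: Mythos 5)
Your non-ergodicity half is correct and is essentially the paper's argument in different packaging. The paper also builds $S$ as the skyscraper with heights $H_i := h_{n_i+1} - h_{n_i, r_{n_i}-1} = \max H_{n_i}$ (your $w_{k_m}$) along a sparse sequence with $H_i \ge 2H_{i-1}$; your telescoping identity rewriting (\ref{Vadim's condition}) as $g_n > 2W_n + 1$ is exactly the paper's computation $\max C_T^m(I) = h_m - h_1 - (s_{1,r_1-1} + \dots + s_{m-1,r_{m-1}-1})$, used there to say that the maximum of $C_S^n(J)+1$ is below the smallest new positive element of $C_T^{n+1}(I)$; and your ``top differing coordinate'' isolation lemma is a clean, non-inductive version of the paper's three-case induction establishing $[C_S^n(J)+1] \cap C_T^n(I) = \varnothing$. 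That part checks out (including the verification that $0, \pm w_N$ are $g_N$-separated from all other elements of $H_N - H_N$).

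The genuine gap is your first reduction. It is \emph{false} that divergence of $\sum_n \mu(T^n A \cap A)\,\nu(S^n I \cap I)$ forces the rectangle to meet the conservative part: second-Borel--Cantelli conclusions require correlation control, and the implication fails even for invertible measure-preserving maps. Concretely, the shift $R(n,x) = (n+1,x)$ on $\Z \times [0,1]$ is totally dissipative, yet $E = \bigcup_{n \ge 1} \{n\} \times [0, n^{-2}]$ has finite measure and $\mu(R^k E \cap E) = \sum_{n \ge 1} (n+k)^{-2} \asymp 1/k$, so the return series diverges. Thus a divergent expected-return sum does not even rule out total dissipativity, and the dichotomy cannot be invoked the way you invoke it. The good news is that your estimates prove strictly more than divergence: a \emph{uniform} bound $\mu \times \nu\bigl((T \times S)^{w_{k_m}}(A \times I) \cap (A \times I)\bigr) \ge \mu(A)\nu(I)/(2R)$ along $w_{k_m} \to \infty$. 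That does suffice, in two ways. Either note (this repairs your dichotomy route) that for a totally dissipative invertible measure-preserving map the dissipative part is $\bigsqcup_n R^n W$ with $W$ wandering, and a tail estimate then gives $\mu(R^k E \cap E) \to 0$ as $k \to \infty$ for every finite-measure $E$, contradicting your uniform bound. Or take the paper's route, which avoids the dichotomy entirely: your two estimates are precisely the level-set case of partial rigidity --- $T$ with bounded cuts is partially rigid with constant $1/\sup r_n$ along times of the form $\sum_{k=i}^j h_{n,k}$, the skyscraper $S$ is partially rigid with constant $1/2$ along its heights, both statements extend from levels to all finite-measure sets by the standard approximation, and a product of transformations partially rigid along a common sequence is partially rigid, hence conservative. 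Replace your divergence lemma by either argument and the proof closes.
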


\begin{corollary}\label{C:Vcor}
If $T$ is an (infinite) rank-one transformation such that $s_{n-1, r_{n-1}-1} \geq \frac{h_n}{2}$ for all $n \in \N$, then there exists a skyscraper transformation $S$ such that $T \times S$ is conservative but not ergodic.
\end{corollary}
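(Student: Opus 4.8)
The corollary is meant to be a clean, checkable sufficient condition that guarantees the hypothesis of Theorem~\ref{Vadim's theorem}, so my plan is simply to verify that the assumption $s_{n-1,r_{n-1}-1} \geq \frac{h_n}{2}$ (for all $n$) implies inequality~(\ref{Vadim's condition}), and then invoke the theorem directly. The only genuine content is an elementary estimate comparing $h_n$ against the right-hand side of~(\ref{Vadim's condition}).

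\medskip

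The plan is as follows. First I would rewrite~(\ref{Vadim's condition}) and isolate the terms I can control. The hypothesis of the corollary bounds the \emph{last} spacer added at the previous stage, $s_{n-1,r_{n-1}-1}$, from below by $h_n/2$. Since $h_n = r_{n-1} h_{n-1} + (s_{n-1,0} + \dots + s_{n-1,r_{n-1}-1})$ and all spacer counts are nonnegative, the partial sum $h_1 + s_{1,r_1-1} + \dots + s_{n-1,r_{n-1}-1}$ appearing in~(\ref{Vadim's condition}) is at least $s_{n-1,r_{n-1}-1} \geq h_n/2$. Hence $2(h_1 + s_{1,r_1-1} + \dots + s_{n-1,r_{n-1}-1}) \geq 2 \cdot \frac{h_n}{2} = h_n$, so the first chunk of the right-hand side of~(\ref{Vadim's condition}) already dominates $h_n$.

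\medskip

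Next I would account for the remaining terms $\min_{0 \leq j \leq r_n-2} s_{n,j} - 1$. Strictly, the above gives $2(\dots) \geq h_n$, which yields $h_n \leq 2(\dots)$ but~(\ref{Vadim's condition}) demands the strict inequality $h_n < 2(\dots) + \min_j s_{n,j} - 1$. To close the small gap I would note that one may assume the spacers $s_{n,j}$ are chosen with $\min_{0 \leq j \leq r_n-2} s_{n,j} \geq 2$ (equivalently, that $T$ is genuinely an infinite transformation with enough spacers at each stage, which is already built into the infinite rank-one setup); then $\min_j s_{n,j} - 1 \geq 1 > 0$, and combining with $2(\dots) \geq h_n$ gives the strict inequality. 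Bounded cuts is automatic if the $r_n$ are bounded — and if they are not explicitly assumed bounded, I would remark that the corollary should be read with the standing assumption of bounded cuts inherited from Theorem~\ref{Vadim's theorem}, or else restrict attention to the bounded-cut case.

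\medskip

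The main (and essentially only) obstacle is the bookkeeping at the boundary between a non-strict and a strict inequality: the hypothesis $s_{n-1,r_{n-1}-1} \geq h_n/2$ gives $h_n \leq 2(\dots)$ but not automatically the strict bound required by~(\ref{Vadim's condition}). I expect this to be resolved cleanly by the positivity of $\min_j s_{n,j} - 1$, so no serious difficulty remains; the rest is direct substitution. Once~(\ref{Vadim's condition}) is verified for all $n \geq 2$, Theorem~\ref{Vadim's theorem} produces the desired skyscraper transformation $S$ with $T \times S$ conservative but not ergodic, completing the proof.
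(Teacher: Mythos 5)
Your overall strategy---verify inequality (\ref{Vadim's condition}) for all $n \ge 2$ and invoke Theorem \ref{Vadim's theorem}---is exactly how the corollary follows (the paper states it without a separate proof, as an immediate consequence of the theorem), and your reading that bounded cuts is inherited as a standing hypothesis from the theorem is correct, since the hypothesis $s_{n-1,r_{n-1}-1} \ge h_n/2$ does not by itself bound the $r_n$. However, your resolution of the strict-inequality issue is a genuine error. You discard every term of the sum $h_1 + s_{1,r_1-1} + \dots + s_{n-1,r_{n-1}-1}$ except the last, obtaining only the non-strict bound $h_n \le 2(\dots)$, and then close the gap by assuming $\min_{0 \le j \le r_n-2} s_{n,j} \ge 2$, claiming this is ``built into the infinite rank-one setup.'' It is not: infiniteness of the measure only requires the total spacer mass to diverge, which can be achieved by the last subcolumn's spacers alone, with all other $s_{n,j} = 0$. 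Worse, your added assumption excludes precisely the examples the paper's remark says the corollary covers: the Hajian--Kakutani transformation has $r_n = 2$, $s_{n,0} = 0$, $s_{n,1} = 2h_n$, and the infinite Chac\'on transformation has $s_{n,0} = 0$, $s_{n,1} = 1$, so in both cases $\min_{0 \le j \le r_n-2} s_{n,j} = 0$. As written, your argument proves the corollary only under a side hypothesis that defeats its purpose.

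The fix is immediate and uses exactly the term you threw away. Since all spacer counts are nonnegative and $h_1 \ge 2$ (column $C_0$ has height $1$ and is cut into $r_0 \ge 2$ pieces), the hypothesis $s_{n-1,r_{n-1}-1} \ge h_n/2$ gives
\[
2\bigl(h_1 + s_{1,r_1-1} + \dots + s_{n-1,r_{n-1}-1}\bigr) \;\ge\; 2h_1 + 2\,s_{n-1,r_{n-1}-1} \;\ge\; h_n + 4,
\]
and since $\min_{0 \le j \le r_n-2} s_{n,j} \ge 0$, the right-hand side of (\ref{Vadim's condition}) is at least $h_n + 3 > h_n$. This yields the strict inequality for every $n \ge 2$ with no extra assumptions, and in particular it applies to the infinite Chac\'on and Hajian--Kakutani examples. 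With that one-line repair your proof is complete and coincides with the intended derivation from Theorem \ref{Vadim's theorem}.
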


\begin{remark}
The corollary covers many important examples of infinite rank-one transformations such as the infinite Chac\'on transformation and the Hajian-Kakutani skyscraper transformation.
\end{remark}

Before proceeding to the proof, we make a couple of observations.
\begin{enumerate}
\item If $T$ is a rank-one transformation and $I$ is the base level of column $C_1$, then for all $m \geq 2$,
\begin{multline} \label{ConsTM}
C_T^m(I) = C_T^{m-1}(I) \\
\oplus \{c \sum_{k = i}^j h_{m,k}: c \in \{-1,0,1\}, 0 \leq i \leq j < r_m-1 \}.
\end{multline}
Thus,
\[\begin{aligned} 
\max C_T^m(I) &= (h_2 - h_{1,r_1-1}) + \dots + (h_m - h_{m-1,r_{m-1}-1}) \\
&= (h_2 - h_1 - s_{1,r_1-1}) + \dots + (h_m - h_{m-1} - s_{m-1,r_{m-1}-1}) \\
&= h_m - h_1 - (s_{1,r_1-1} + \dots + s_{m-1,r_{m-1}-1}).
\end{aligned}\]
\item Rank-one transformations with bounded cuts, i.e. $\sup r_n < \infty$, are partially rigid with $\alpha = \frac{1}{\sup r_n}$ and sequence $n_i$ consisting of infinitely many terms of the form $\sum_{k=i}^j h_{n,k}$, where $n \in \N$ and $0 \leq i \leq j \leq r_n-2$. We note that if $T$ and $S$ are both partially rigid along sequences that intersect in an infinite subsequence, then $T \times S$ is also partially rigid and thus conservative.
\end{enumerate}

\begin{proof}[Proof of Theorem \ref{Vadim's theorem}]
Let $T$ be as in the statement of the theorem, and let $S$ be the skyscraper transformation where the heights $H_i$ are given by
\[H_i := h_{n_i+1} - h_{n_i, r_{n_i}-1},\]
where $n_i$ is an increasing sequence in $\N$ chosen so that $H_i \geq 2 H_{i-1}$ for all $i$. Then since both $T$ and $S$ are partially rigid along the sequence $n_i = H_i$, their product $T \times S$ is conservative.

Let $J$ be the base level of column $C_1$ for $S$. Then if $m \geq 1$,
\[C_S^{m+1}(J) = C_S^m(J) \oplus \{-H_i, 0, H_i\}.\]
Letting $I$ be the base level of column $C_1$ for $T$, we show that
\[[C_S(J) + 1] \cap C_T(I) = \varnothing.\]
This will give us that $(T \times S)^n (I \times J) \cap (I \times S(J)) = \varnothing$ for all $n \in \Z$, and therefore that $T \times S$ is not ergodic.

\begin{figure}[htp!]
		\begin{tikzpicture}[scale=0.5, transform shape]
			\draw [->] (0,0) -- (10,0);
				\draw (.25,-.5) rectangle (.75,.5);
					\node at (.5,-1) {\tiny{C$^n_S(J)-H_n$}};
				\draw (4.75,-.5) rectangle (5.25,.5);
					\node at (5,-1) {\tiny{C$_S^n(J)$}};
				\draw (9.25,-.5) rectangle (9.75,.5);
					\node at (9.5,-1) {\tiny{C$_S^n(J)+H_n$}};
			\draw [->] (0,-3) -- (10,-3);
				\draw (.25,-3.5) rectangle (.75,-2.5);
					\node at (.5,-4) {\tiny{C$_T^n(I)-H_n$}};
				\draw (1.25,-3.5) rectangle (1.75,-2.5);
					\draw [->] (1.5,-4.3) -- (1.5,-3.9);
					\node at (1.5,-4.5) {\tiny{C$_T^n(I)-H_n+min(h_{n,0},h_{n,r_n-2})$}};
			\draw (5,-.1) -- (5,.1);
			\node at (5,-.25) {\tiny{0}};
				
				\draw [dotted] (2.15,-2.8) -- (3.35,-2.8);
				\draw (3.75,-3.5) rectangle (4.25,-2.5);
				\draw (4.75,-3.5) rectangle (5.25,-2.5);	
					\node at (5,-4) {\tiny{C$^n_T(I)$}};
					\draw (5,-3.1) -- (5,-2.9);
						\node at (5,-3.25) {\tiny{0}};
				\draw (5.75,-3.5) rectangle (6.25,-2.5);
				\draw [dotted] (6.65,-2.8) -- (7.85,-2.8);
				\draw (8.25,-3.5) rectangle (8.75,-2.5);
					\draw [->] (8.5,-4.3) -- (8.5,-3.9);
					\node at (8.5,-4.5) {\tiny{C$^n_T(I)+H_n-min(h_{n,0},h_{n,r_n-2})$}};
				\draw (9.25,-3.5) rectangle (9.75,-2.5);
					\node at (9.5,-4) {\tiny{C$_T^n(I)+H_n$}};
					
		\end{tikzpicture}
		\bigskip
		\caption{The top and bottom images represent collections of intervals, the union of which contain $C_S^{n+1}(J)$ and $C_T^{n+1}(I)$, respectively.}
		\end{figure}
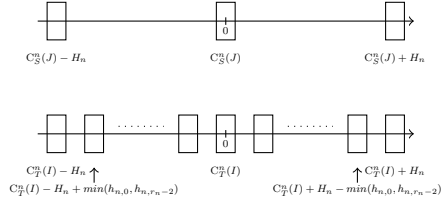

\bigskip

\noindent \textbf{Claim:} If $[C_S(J) + 1] \cap C_T(I) \not = \varnothing$, then there must exist $n \in \N$ such that
\[[C_S^n(J) + 1] \cap C_T^n(I) \not = \varnothing.\]
\begin{proof}[Proof of claim]
Note that one can rewrite (\ref{Vadim's condition}) as
\begin{multline}
h_n - h_1 - (s_{1,r_1-1} + \dots s_{n-1, r_{n-1}-1}) + 1 \\
< [\min_{0 \leq j \leq r_n-2} h_{n,j}] - (h_n - h_1 - (s_{1,r_1-1} + \dots + s_{n-1, r_{n-1}-1}),
\end{multline}
which just says that the maximum of $C_S^n(J) + 1$ (which is equal to the maximum of $C_T^n(J) + 1$) is strictly less than the minimum positive element of $C_T^{n+1}(I)$ that is not in $C_T^n(I)$. Since the sets $C_S^n(A)$ are always symmetric about $0$, this is enough to conclude that
\begin{equation} \label{disjoint}
[C_S^n(J) + 1] \cap [C_T^{n+1}(I) \setminus C_T^{n}(I)] = \varnothing.
\end{equation}
for all $n \in \N$. By repeating this argument, one can even conclude that for all $m > n$,
\begin{equation}
[C_S^n(J) + 1] \cap [C_T^{m}(I) \setminus C_T^{n}(I)] = \varnothing.
\end{equation}
\end{proof}

\bigskip

We now prove by induction that for all $n \in \N$,
\begin{equation} \label{hypothesis}
[C_S^n(J) + 1] \cap C_T^n(I) = \varnothing,
\end{equation}
which would complete the proof based on the Claim. The base case is clear since $C_S^1(J) + 1 = \{1\}$ and $C_T^1(I) = \{0\}$. Assume that (\ref{hypothesis}) holds for some $n \in \N$. Recall that
\[C_S^{n+1}(J) = C_S^n(J) \oplus \{-H_n,0,H_n\}.\]
We prove each of the following:
\begin{enumerate}
\item $[C_S^n(J) + 1] \cap C_T^{n+1}(I) = \varnothing$
\item $[C_S^n(J) - H_n + 1] \cap C_T^{n+1}(I) = \varnothing$
\item $[C_S^n(J) + H_n + 1] \cap C_T^{n+1}(I) = \varnothing$
\end{enumerate}

\noindent
\underline{Case 1}: By (\ref{disjoint}) and the inductive hypothesis, $[C_S^n(J) + 1] \cap C_T^{n+1}(I) = \varnothing$.

\bigskip

\noindent
\underline{Case 2}: By Case 1, we have
\begin{equation} \label{Ind hyp}
[C_S^n(J) - H_n + 1] \cap [C_T^{n}(I) - H_n] = \varnothing.
\end{equation}
Thus, to prove case 2, it suffices to show
\begin{equation} \label{Case2}
\max C_S^n(J) - H_n + 1 < \min C_T^n(I) - [H_n - \min(h_{n,0}, h_{n,r_n-2})].
\end{equation}
This follows from (\ref{ConsTM}) and the observation that $H_n - \min(h_{n,0}, h_{n,r_n-2})$ is the second largest element of the form $\sum_{k=i}^j h_{m,k}$.
Now (\ref{Case2}) can be rewritten as
\[H_1 + \dots + H_{n-1} - H_n + 1 < -(H_1 + \dots + H_{n-1}) - [H_n - \min(h_{n,0}, h_{n,r_n-2})].\]
But this in turn is equivalent to
\[2 (h_n - h_1 - [s_{1,r_1-1} + s_{2,r_2-1} + \dots + s_{n-1,r_{n-1}}]) + 1 < h_n + \min(s_{n,0}, s_{n,r_n-2}),\]
which follows from (\ref{Vadim's condition}).

\bigskip

\noindent
\underline{Case 3}: This follows from the inductive hypothesis and the fact that
\[\max C_T^n(I) + [H_n - \min(h_{n,0}, h_{n,r_n-2})] \leq \min C_S^n(J) + H_n,\]
which can be proven by repeating the argument used in Case 2.
\end{proof}

\begin{corollary} \label{Vadim's corollary}
Let $T$ be a rank-one transformation with bounded cuts, and satisfying for all $n \geq 1$,
\begin{equation} \label{inequality}
h_{n+1} - h_n < 2 s_{n, r_n-1} + \min_{0 \leq j \leq r_{n+1}-2} s_{n+1,j} - \min_{0 \leq j \leq r_n-2} s_{n, j}.
\end{equation}
\end{corollary}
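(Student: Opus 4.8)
The plan is to deduce this corollary directly from Theorem \ref{Vadim's theorem} by showing that the hypothesis (\ref{inequality}) implies the hypothesis (\ref{Vadim's condition}); once this implication is in hand, the existence of a skyscraper transformation $S$ with $T \times S$ conservative but not ergodic is immediate from the theorem, with no further construction needed. To streamline the bookkeeping I would set $S_n := h_1 + s_{1,r_1-1} + \dots + s_{n-1,r_{n-1}-1}$ and $m_n := \min_{0 \leq j \leq r_n-2} s_{n,j}$, so that condition (\ref{Vadim's condition}) reads $h_n < 2 S_n + m_n - 1$ for all $n \geq 2$, while (\ref{inequality}) reads $h_{n+1} - h_n < 2 s_{n,r_n-1} + m_{n+1} - m_n$ for all $n \geq 1$. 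The crucial structural feature is the telescoping identity $S_{n+1} = S_n + s_{n,r_n-1}$, which lets the cumulative spacer sum in (\ref{Vadim's condition}) absorb the single spacer appearing in the difference form (\ref{inequality}).

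I would then prove $h_n < 2 S_n + m_n - 1$ for all $n \geq 2$ by induction on $n$. For the inductive step, assuming the bound at stage $n$, I add the instance of (\ref{inequality}) at index $n$ to obtain
\[
h_{n+1} < h_n + 2 s_{n,r_n-1} + m_{n+1} - m_n < (2 S_n + m_n - 1) + 2 s_{n,r_n-1} + m_{n+1} - m_n,
\]
and the right-hand side collapses, via $S_{n+1} = S_n + s_{n,r_n-1}$, to exactly $2 S_{n+1} + m_{n+1} - 1$. Thus the $m_n$ terms cancel and the partial sum absorbs the extra spacer, so the induction propagates with no loss at all.

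The base case $n = 2$ is the only step that needs a separate word. Here (\ref{inequality}) at index $1$ gives $h_2 < h_1 + 2 s_{1,r_1-1} + m_2 - m_1$, whereas the bound we want is $h_2 < 2 h_1 + 2 s_{1,r_1-1} + m_2 - 1$. The difference between the two right-hand sides equals $h_1 - 1 + m_1$, which is nonnegative because every column has height $h_1 \geq 1$ and each spacer count is nonnegative, so $m_1 \geq 0$. This closes the base case and hence the induction.

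Since the inductive step is a clean telescoping that exploits precisely the difference form of (\ref{inequality}) against the cumulative form of (\ref{Vadim's condition}), I expect no genuine obstacle; the only point demanding care is the base case, where the weaker accumulated constant must be promoted from $h_1$ to $2h_1$, which is guaranteed by the trivial bound $h_1 \geq 1$. Having verified (\ref{Vadim's condition}), I would conclude by invoking Theorem \ref{Vadim's theorem} to produce the desired rank-one skyscraper $S$ for which $T \times S$ is conservative but not ergodic.
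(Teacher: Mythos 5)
Your proposal is correct and takes essentially the same route as the paper: the paper also proves (\ref{Vadim's condition}) by induction, adding the instance of (\ref{inequality}) at index $n$ to both sides of the stage-$n$ bound so that the cumulative sum $h_1 + s_{1,r_1-1} + \dots + s_{n-1,r_{n-1}-1}$ absorbs the new spacer term, with a base case reducing to the trivial inequality $h_1 < 2h_1 + \min_{0 \leq j \leq r_1-2} s_{1,j} - 1$. Your handling of the base case (comparing right-hand sides and using $h_1 - 1 + \min_j s_{1,j} \geq 0$) is an equivalent, if anything slightly more careful, version of the paper's step, so nothing needs to change.
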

\begin{proof}
We prove by induction that (\ref{Vadim's condition}) holds for every $n \geq 2$. For the base case, note that
\[h_1 < 2 h_1 + \min_{0 \leq j \leq r_1-2} s_{1, j} -1.\]
Then by adding the inequality (\ref{inequality}) for $n = 1$ to both sides, we obtain
\[h_2 < 2 (h_1 + s_{1,r_1-1}) + \min_{0 \leq j \leq r_2-2} s_{2, j} - 1.\]
For the inductive step, suppose that (\ref{Vadim's condition}) holds for some $n \geq 2$. Then by adding (\ref{inequality}) to both sides again, we are done.
\end{proof}


\section{Ergodic products of rank-one transformations}

We recall that in \cite{AaLiWe79} it is shown that for every conservative ergodic infinite measure-preserving transformation there exists a conservative ergodic Markov shift $S$ so that $T\times S$ 
is ergodic. In this section we show that we can choose $S$ to be rank-one, but $T$ 
has to be restricted to be a rank-one transformation with bounded cuts. 

In the previous section, we observed that rank-one transformations with bounded cuts are partially rigid. A slight generalization of this observation, which we use below, is that if $I$ and $J$ are levels in the same column, then there exist infinitely many $n>0$ such that $\mu(T^n(I) \cap J) \geq \alpha \mu(I)$. In the proof below, we use $\mu$ to denote Lebesgue measure on the real line.

\begin{theorem}
Suppose $T$ is a rank-one transformation with bounded cuts defined on $X =[0,1)$ or $[0,\infty)$. Then there exists a rank-one skyscraper transformation $S$ such that $T \times S$ is ergodic.
\end{theorem}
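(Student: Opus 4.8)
The plan is to verify ergodicity through the criterion recalled in Section~2: $T \times S$ is ergodic exactly when for every pair of positive-measure sets $\mathcal{A}, \mathcal{B} \subset X \times Y$ there is an $n > 0$ with $(\mu \times \nu)\big((T\times S)^n \mathcal{A} \cap \mathcal{B}\big) > 0$. Approximating $\mathcal{A}$ and $\mathcal{B}$ from inside by single rectangles $I \times J$ and $I' \times J'$ (levels of $T$ against levels of $S$) on which they have density close to $1$, it suffices to prove a \emph{quantitative} transfer statement with a constant not depending on the rectangle: for any two levels $I, I'$ lying in a common column of $T$ and any two levels $J, J'$ lying in a common column of $S$, there are arbitrarily large $n$ with
\[
\mu(T^n I \cap I') \geq \alpha\,\mu(I) \qquad \text{and} \qquad \nu(S^n J \cap J') \geq \tfrac{1}{2}\,\nu(J).
\]
The product of these two bounds is $\tfrac{\alpha}{2}\,\mu(I)\nu(J)$, and it is precisely this uniform lower bound (rather than mere nonemptiness of an intersection of return-time sets) that survives the error from the rectangle approximation and forces $(\mu\times\nu)\big((T\times S)^n\mathcal{A}\cap\mathcal{B}\big) > 0$.

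The left-hand bound is supplied by $T$ alone. Because $T$ has bounded cuts, observation~(2) of Section~\ref{S:cnerg} gives partial rigidity with $\alpha = 1/\sup r_n$, and, as noted just before the theorem, for any two levels $I, I'$ in the same column of $T$ there are infinitely many $n$ with $\mu(T^n I \cap I') \geq \alpha\,\mu(I)$. (Two levels in different columns are first cut into levels of a common, finer column, so assuming $I, I'$ share a column costs nothing.) Thus the role of $S$ is only to realize the second bound \emph{at times that already work for} $T$: we must arrange that an $S$-transfer $J \to J'$ of definite mass occurs at one of the infinitely many partial-rigidity times of $(I,I')$.

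I would construct $S$ as a skyscraper and choose its heights $H_m$ (subject only to $H_{m+1} \geq 2H_m$) by a bookkeeping over all quadruples $(I,I',J,J')$ of levels. Suppose the quadruple treated at stage $m$ has $J, J'$ in column $C_k$ of $S$ ($k < m$) at heights differing by $\delta = h(J') - h(J)$. Let $t$ be a partial-rigidity transfer time of $T$ for $(I,I')$ chosen so large that $t - \delta > 2H_{m-1}$, and set $H_m := t - \delta$. Then at time $n := \delta + H_m = t$ the stage-$m$ restacking $S^{H_m}$ carries the half of $J$ sitting in the left subcolumn of $C_m$ up onto a copy of $J$, and the remaining in-column shift $S^{\delta}$ moves that copy onto a copy of $J'$; hence $\nu(S^{t} J \cap J') \geq \tfrac12\nu(J)$ up to a negligible boundary term, while $\mu(T^{t} I \cap I') \geq \alpha\,\mu(I)$ by the choice of $t$. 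Since $t$ may be taken as large as we like, $H_m \geq 2H_{m-1}$ is maintained; since every quadruple is visited infinitely often and the sets $C_S^m(J)$ only grow with $m$, the required time exists for every pair of rectangles. Conservativity of the product needs no separate argument, as ergodicity already implies it on a $\sigma$-finite nonatomic space.

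The crux is reconciling, with a single height, an exact $S$-transfer against a partial-rigidity time of $T$ while keeping the $S$-overlap a \emph{fixed} fraction. The natural temptation --- setting the heights of $S$ equal to return times of $T$ outright --- fails because a general $S$-transfer between two levels is a long signed combination $\sum c_i H_i$, and each cut roughly halves the transferred mass, so the fraction decays to $0$. The device above avoids this by using only the top-stage shift $H_m$ (one cut, fraction $\tfrac12$) and absorbing the height offset $\delta$ directly into the choice $H_m = t - \delta$; the freedom to take $t$ arbitrarily large within the infinite set of $T$-transfer times is exactly what makes the offset harmless. The remaining work is routine: checking that the boundary loss in the $S^{\delta}$ shift is negligible and that the rectangle approximation is uniform enough for the fixed constant $\alpha/2$ to prevail.
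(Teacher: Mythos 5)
Your proposal is correct and is essentially the paper's own argument: both construct a skyscraper $S$ whose heights are chosen inductively, by bookkeeping over countably many pairs of levels, against the partial-rigidity transfer times of $T$ (constant $\alpha = 1/\sup r_n$ from bounded cuts on the $T$ side, constant $\tfrac{1}{2}$ from the skyscraper restacking on the $S$ side), and then run the same $(1-\ve)$-full rectangle approximation with $\ve < \alpha/4$ so that the uniform lower bound $\tfrac{\alpha}{2}\mu(I)\nu(J)$ survives the approximation error. The only divergence is a bookkeeping detail: the paper fixes a universal height sequence $n_i$ of $T$-transfer times and absorbs the height offset between the two $S$-levels by shifting the $T$-target level (choosing $I^*$ a distance $d(J,M)$ below $L^*$) and appending the column-map time $d(J,M)$ to the exponent, whereas you absorb that offset directly into the height choice $H_m = t - \delta$ --- a harmless variant, not a different method.
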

\begin{proof}
We can inductively find an increasing sequence $n_i$ such that $n_i > 2 n_{i-1}$ for each $i$, and such that for each ordered pair of levels $(I_m, J_m)$ coming from the same column (where we consider pairs of levels from \textit{all} columns) there exists an infinite subsequence $\{n_{i_k}\}$ such that for each $k$,
\[\mu(T^{n_{i_k}}I_m \cap J_m) \geq \alpha \mu(I_m) = \alpha \mu(J_m).\]
Now define $S$ to be the skyscraper transformation with heights $h_i = n_i$ (we therefore need to require $n_i \geq 2 n_{i-1}$ so that $S$ is well-defined). Take $A, B \subset X \times Y$ with $\mu \times \mu(A) > 0$ and $\mu \times \mu(B) > 0$. We show that there exists $k>0$ such that $\mu \times \mu((T \times S)^k(A) \cap B) > 0$.

Fix $0 < \ve < \frac{\alpha}{4}$. We can choose levels $I$ and $L$ in column $C_m^T$ and levels $J$ and $M$ in column $C_\ell^S$ such that
\[\mu \times \mu (A \cap (I \times J)) >  (1-\ve) \mu(I) \mu(J)\]
\[\mu \times \mu (B \cap (L \times M)) >  (1-\ve) \mu(L) \mu(M).\]
Without loss of generality, we can assume that $M$ is above $J$ in column $C_\ell^S$. It is also not difficult to see that one can find a copy $L^*$ of $L$ in $C_{m+j}^T$ (for some $j$) such that
\[\mu \times \mu (B \cap (L^* \times M)) > (1-\ve) \mu(L^*) \mu(M)\]
while having that $L^*$ is at least $d(J, M)$ above the bottom level of $C_{m+j}^T$.

One can find a copy $I_1$ of $I$ in $C_{m+j}^T$ such that $I_1 \times J$ is still $(1-\ve)$-full  of $A$. Let $I^*$ denote the level in $C_{m+j}^T$ that is a distance of $d(J,M)$ below $L^*$. By construction, $n_i$ is a partial rigidity sequence for $S$ so there exists some $n_i$ such that
\[\mu(T^{n_i}I_1 \cap I^*) \geq \alpha \mu(I^*)\]
\[\mu(S^{n_i}J \cap J) \geq \frac{1}{2} \mu(J).\]
Hence, for this $n_i$, we have
\[\mu \times \mu ((T \times S)^{n_i}(I_1 \times J) \cap (I^* \times J)) \geq \frac{\alpha}{2} \mu(I^*) \mu(J),\]
and hence
\[\begin{aligned}
\mu \times \mu ((T \times S)^{n_i}A \cap (I^* \times J)) & \geq \left(\frac{\alpha}{2} - \ve \right) \mu(I^*) \mu(J) \\
& > \frac{\alpha}{4} \mu(I^*) \mu(J).
\end{aligned}\]
We then obtain
\[\mu \times \mu ((T \times S)^{n_i + d(J,M)} A \cap B) > \left( \frac{\alpha}{4} - \ve \right) \mu(I^*) \mu(J) > 0.\]
\end{proof}

\begin{remark}
For completeness, we remark that the transformation $S$ above can be chosen to be rigid or have infinite ergodic index.
\end{remark}

\newcommand{\M}{\mathcal{M}}
\newcommand{\E}{\mathcal{E}}

\section{Category results concerning product transformations}

Let $(X, \B, \mu)$ be an infinite Lebesgue space (for simplicity, we will think of $X$ as $\R$ and $\mu$ as Lebesgue measure). We write $\M(\mu)$ for the set of all invertible transformations on $X$ that preserve $\mu$, and write $\E(\mu) \subset \M(\mu)$ for the set of all ergodic transformations. One can define a topology on $\M(\mu)$ called the \emph{weak topology}, which we briefly review. A \emph{dyadic interval} refers to an interval of the form $[\frac{k}{2^j}, \frac{k+1}{2^j})$, where $k \in \Z$ and $j \in \N$, and by a \emph{dyadic set} we mean a finite union of dyadic intervals. We note that the collection of dyadic sets $\{D_i\}_{i \in \N}$ forms a dense algebra of $(\R, \B, \mu)$, and define the distance between two transformations $T$ and $S$ in $\M(\mu)$ to be
\[d(T, S) := \sum_{i = 1}^\infty \frac{\mu(T^{-1}(D_i) \triangle S^{-1}(D_i))}{2^i \mu(D_i)}.\]
It can be shown that $d$ is a complete, separable metric, and the topology one obtains is called the \emph{weak topology} on $\M(\mu)$. 
In \cite{Sa71}, Sachdeva shows that $\E(T)$ is a dense $G_\delta$ set in $\M(T)$ (i.e. ergodicity is a \emph{generic} property).

\begin{definition} \label{three classes}
Given $T \in \E(\mu)$, we define the following subsets of $\E(\mu)$:
\begin{enumerate}
\item $E(T) := \{S \in \E(\mu): T \times S \text{ is ergodic}\}$
\item $\bar{E}C(T) := \{S \in \E(\mu): T \times S \text{ is conservative but not ergodic}\}$
\item $\bar{C}(T) := \{S \in \E(\mu): T \times S \text{ is not conservative}\}$
\end{enumerate}
\end{definition}

For a given transformation $T \in \M(\mu)$, we study the Baire categories of the sets $E(T)$, $\bar{E}C(T)$ and $\bar{C}(T)$. We first introduce a few notions. A transformation $T \in \M(\mu)$ is called \emph{anti-periodic} if for every $n \in \N$ and for all sets $A \subset X$, there exists $B \subset A$ such that $T^{-n}(B) \setminus B \not = \varnothing$ and $B \setminus T^{-n}(B) \not = \varnothing$. Given a transformation $T \in \M(\mu)$ and $\rho \in \R$, the transformations $T \cdot \rho$ and $\rho \cdot T$ are defined by $(T \cdot \rho)(x) = T(\rho \cdot x)$ and $(\rho \cdot T)(x) = \rho \cdot (Tx)$. For us, the \emph{conjugacy class} of $T$ will be the set
\[\mathscr{C}(T) := \{\rho^{-1} \cdot R^{-1} \circ T \circ R \cdot \rho: R \in \M(\mu), \rho \in (0, \infty)\}.\]

\begin{theorem}
Let $T \in \M(\mu)$, and let $S$ be an anti-periodic transformation. Then:
\begin{enumerate}
\item If $S \in E(T)$, then $E(T)$ is a dense $G_\delta$ set in $\M(\mu)$.
\item If $S \in \bar{E}C(T)$, then $\bar{E}C(T)$ is dense in $\M(\mu)$.
\item If $S \in \bar{C}(T)$, then $\bar{C}(T)$ is dense in $\M(\mu)$.
\end{enumerate}
\end{theorem}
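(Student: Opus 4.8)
The plan is to establish the three parts by exploiting the conjugacy-invariance of the relevant product properties together with the fact that the conjugacy class of an anti-periodic transformation is dense in $\M(\mu)$ in the weak topology. The crucial observation is that each of the properties ``$T \times S$ is ergodic,'' ``$T \times S$ is conservative,'' and ``$T \times S$ is conservative but not ergodic'' is invariant under the operations appearing in the definition of $\mathscr{C}(S)$: conjugating $S$ by an invertible $R$ replaces $T \times S$ by a transformation isomorphic to it via $\mathrm{Id} \times R$, and scaling by $\rho \in (0,\infty)$ does not affect conservativity or ergodicity of the product since these are measure-theoretic properties preserved under the relevant rescaling. Hence if $S$ lies in $E(T)$, $\bar{C}(T)$, or $\bar{E}C(T)$, then the entire conjugacy class $\mathscr{C}(S)$ lies in that same set. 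Since a classical theorem (in the style of Halmos's conjugacy lemma, adapted to the infinite-measure weak topology) asserts that the conjugacy class of any anti-periodic transformation is dense in $\M(\mu)$, each of $E(T)$, $\bar{C}(T)$, $\bar{E}C(T)$ contains a dense set and is therefore itself dense. This yields the density assertions in all three parts simultaneously.

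The remaining content is the $G_\delta$ claim in part (1). For this I would show that $E(T)$ is a countable intersection of open sets in the weak topology. Following Sachdeva's characterization, ergodicity of $T \times S$ can be phrased as a countable conjunction of conditions of the form: for every pair of dyadic rectangles $D_i \times D_j$ and $D_k \times D_\ell$ of positive measure and every $\delta > 0$, there exists $n$ with $\mu\times\mu\bigl((T\times S)^n(D_i \times D_j) \cap (D_k \times D_\ell)\bigr) > \delta'$ for a suitable quantitative threshold $\delta'$ depending on the measures. Each such ``there exists $n$'' condition defines a set of $S$ that is open in the weak topology, because the map $S \mapsto \mu\times\mu\bigl((T\times S)^n(\cdot)\cap(\cdot)\bigr)$ depends continuously (indeed, through finitely many of the defining coordinates $\mu(S^{-1}(D) \triangle \cdot)$) on $S$ for fixed $n$ and fixed dyadic sets. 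Intersecting over the countably many rectangles and rational thresholds exhibits $E(T)$ as a $G_\delta$. Combined with density, this makes $E(T)$ a dense $G_\delta$, completing part (1).

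The main obstacle will be the precise formulation of the open conditions characterizing product ergodicity and verifying their weak-continuity in $S$. The subtlety is that ergodicity is naturally a ``for all sets, there exists $n$'' statement, and the universal quantifier over all measurable sets must be reduced to a countable quantifier over the dense dyadic algebra without losing equivalence; this reduction relies on approximating arbitrary positive-measure sets by dyadic sets in measure, exactly as in the single-transformation case treated by Sachdeva, but one must check the approximation goes through for the product system with the second factor only weakly controlled. A second delicate point is confirming that scaling by $\rho$ genuinely preserves membership in each of the three classes; while conjugation by $R$ is transparently an isomorphism of the product, the rescaling operation must be checked to interact correctly with the infinite measure so that conservativity and ergodicity of $T \times (\rho \cdot S \cdot \rho^{-1})$ match those of $T \times S$. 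Once these two points are settled, the density statements follow formally from the conjugacy-class density, and the $G_\delta$ statement follows from the open-set decomposition.
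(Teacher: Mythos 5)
Your density argument coincides with the paper's: each of the three classes is invariant under the operations defining $\mathscr{C}(S)$, and Sachdeva's theorem gives density of the conjugacy class of an anti-periodic transformation. The scaling point you flag as delicate is settled easily: $x \mapsto \rho x$ is a nonsingular isomorphism with constant Radon--Nikodym derivative, so $\mathrm{id} \times (R \cdot \rho)$ conjugates $T \times S$ to $T \times (\rho^{-1} \cdot R^{-1} S R \cdot \rho)$ through a nonsingular isomorphism, and ergodicity and conservativity are invariants of such isomorphisms. So parts (2), (3), and the density claim in (1) are fine and match the paper.

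The $G_\delta$ half of part (1), however, contains a genuine gap. Your proposed characterization --- for every pair of dyadic rectangles there exists $n$ with $(\mu\times\mu)\bigl((T\times S)^n(D_i\times D_j)\cap(D_k\times D_\ell)\bigr) > \delta'$ for a threshold $\delta'$ depending on the measures --- is the finite-measure trick (where $\delta' = \mu(A)\mu(B)/2$ comes from the mean ergodic theorem), and it has no infinite-measure analogue. The natural candidate threshold $\mu(A)\mu(B)/\mu(X)$ is zero, Proposition \ref{BET} of the paper shows the Ces\`aro averages of these intersections tend to $0$, and for zero-type products the intersections $(\mu\times\mu)\bigl((T\times S)^n R \cap R'\bigr)$ tend to $0$ along the whole sequence; so ergodicity does not guarantee any fixed positive threshold, and the forward implication of your equivalence fails. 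Taking $\delta' = 0$ breaks the converse instead: a non-ergodic measure-preserving transformation of the plane whose nontrivial invariant set meets every dyadic rectangle in positive measure, and which is ergodic on that set and on its complement, satisfies every threshold-$0$ condition, and such examples exist because the dyadic algebra is fixed while measure isomorphisms can interleave an invariant set through it. Repairing this would force you to rebuild Sachdeva's covering-type characterization of ergodicity for the product, which is exactly the nontrivial work the paper avoids. The paper's route is much softer: identify $\M(\mu)$ with the subspace $\{T\} \times \M(\mu)$ of $\M(\mu\times\mu)$ via the (continuous) map $S \mapsto T \times S$, apply Sachdeva's theorem on the product space --- which is again an infinite Lebesgue space --- to get that $\E(\mu\times\mu)$ is $G_\delta$ in $\M(\mu\times\mu)$, and observe that $E(T)$ is the trace of this $G_\delta$ set on the subspace, hence $G_\delta$ in $\M(\mu)$. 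You should replace your direct re-derivation with this pullback argument.
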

\begin{proof}
It is easy to check that if $S \in E(T)$, $S \in \bar{E}C(T)$ or $S \in \bar{C}(T)$, then we respectively have $\mathscr{C}(S) \subset E(T)$, $\mathscr{C}(S) \subset \bar{E}C(T)$ or $\mathscr{C}(S) \subset \bar{C}(T)$. In \cite{Sa71}, Sachdeva shows that if $S$ is an anti-periodic transformation, then its conjugacy class $\mathscr{C}(S)$ is dense in $\M(\mu)$.

It remains to show that the existence of an $S$ in $E(T)$ implies that $E(T)$ is a $G_\delta$ set. Consider the space $\M(\mu \times \mu)$ of invertible measure-preserving transformations on the product space $(X \times X, \B \otimes \B, \mu \times \mu)$, and let $A := \{T\} \times \M(\mu)$. Note that one can identify the set $A$ (considered with the subspace topology) with the space $\M(\mu)$. Since the intersection of a $G_\delta$ set with a subspace is also a $G_\delta$ set (in that subspace), the set $A \cap \E(\mu \times \mu)$ is a $G_\delta$ set. But $A \cap E(X \times X)$ exactly corresponds to $E(T)$ under the identification, so $E(T)$ is itself a $G_\delta$ set.
\end{proof}

One can check that rank-one transformations are anti-periodic. Thus our results have the following interesting corollaries:
\begin{enumerate}
\item For any rank-one transformation $T$ with bounded cuts, $E(T)$ is a dense $G_\delta$ set in $\M(\mu)$.
\item For any rank-one transformation $T$ satisfying condition (\ref{Vadim's condition}), $\bar{E}C(T)$ is dense in $\M(\mu)$.
\item For any $T \in \E(\mu)$, $\bar{C}(T)$ is dense in $\M(\mu)$.
\end{enumerate}

Further, it  is that the property of having ergodic cartesian square $T \times T$ is a generic property \cite{Sa71}. The reader may refer to the survey \cite{DaSi09} for other properties. Since rank-one transformations also form a generic set (see \cite{BSSSW15}), we conclude that for a generic transformation $T \in \M(\mu)$, the set $E(T)$ is a dense $G_\delta$ set.

\normalsize
\medskip

\bibliographystyle{plain}
\bibliography{Paper_Oct_4}

\end{document}